\newtheorem{definition}{Definition}[section]
\newtheorem{lemma}[definition]{Lemma}
\newtheorem{prop}[definition]{Proposition}
\newtheorem{theorem}[definition]{Theorem}
\newtheorem{ques}[definition]{Question}
\newtheorem{remark}[definition]{Remark}
\theoremstyle{definition}
\newtheorem{fact}[definition]{Fact}
\newcommand{\CH}{\mathop{{\rm CH}}\nolimits}
\newcommand*{\lcm}{\mathop{{\rm lcm}}\nolimits}
\newcommand*{\PE}{\mathop{{\rm PE}}\nolimits}
\newcommand{\CB}{\mathop{{\rm CB}}\nolimits}
\newcommand{\Zg}{\mathop{{\rm Zg}}\nolimits}
\newcommand*{\eset}{\varnothing}
\newcommand*{\ex}{\exists}
\newcommand*{\lf}{\lfloor}
\newcommand*{\rf}{\rfloor}
\newcommand*{\ms}{\models}
\newcommand*{\ov}{\overline}
\newcommand*{\seq}{\subseteq}
\newcommand*{\sm}{\setminus}
\newcommand*{\sh}{\sharp}
\newcommand*{\sq}{\sqrt}
\newcommand*{\fty}{\infty}
\newcommand*{\wg}{\wedge}
\newcommand*{\wh}{\widehat}
\newcommand*{\bsm}{\left(\begin{smallmatrix}}
\newcommand*{\esm}{\end{smallmatrix}\right)}
\newcommand*{\bp}{\begin{pmatrix}}
\newcommand*{\ep}{\end{pmatrix}}
\newcommand*{\mN}{\mathcal{N}}
\newcommand*{\A}{\mathbb{A}}
\newcommand*{\C}{\mathbb{C}}
\newcommand*{\N}{\mathbb{N}}
\newcommand*{\Z}{\mathbb{Z}}
\newcommand*{\Ga}{\Gamma}
\newcommand*{\om}{\omega}
\renewcommand*{\phi}{\varphi}
\newcommand*{\Sig}{\Sigma}
\renewcommand{\emph}{\textbf}
\date{}
\begin{document}

\title[]{The Ziegler spectrum of the ring of entire complex valued functions}

\author[]{Sonia L'Innocente}

\address[S.~L'Innocente]{University of Camerino, School of Science and Technologies,
Division of Mathematics, Via Madonna delle Carceri 9, 62032 Camerino, Italy}

\email{sonialinnocente@unicam.it}

\author[]{Fran\c{c}oise Point}

\address[F. Point]{Department of Mathematics (Le Pentagone), University of Mons 20, place du Parc, 
B-7000 Mons, Belgium}

\email{point@math.univ-paris-diderot.fr}

\thanks{The second author is Research Director at the \lq \lq Fonds de la Recherche Scientifique FNRS-FRS".}

\author[]{Gena Puninski}

\address[G.~Puninski]{Belarusian State University, Faculty of Mechanics and Mathematics, av. Nezalezhnosti 4,
Minsk 220030, Belarus}

\email{punins@mail.ru}

\author[]{Carlo Toffalori}

\address[C.~Toffalori]{University of Camerino, School of Science and Technologies,
Division of Mathematics, Via Madonna delle Carceri 9, 62032 Camerino, Italy}

\email{carlo.toffalori@unicam.it}

\thanks{The first and fourth authors were supported by Italian PRIN 2012 and GNSAGA-INdAM}

\subjclass[2000]{03C60 (primary), 03C20, 13C11, 30D20}

\keywords{Ziegler spectrum, Entire functions, Non-principal ultrafilters, B\'ezout domains}

\begin{abstract}
We will describe the Ziegler spectrum over the ring of entire complex valued functions.
\end{abstract}

\maketitle

\section{Introduction}\label{S-intro}

In \cite{P-T15} the third and fourth authors developed the model theory of modules over B\'ezout domains. For
instance, a substantial information on the structure of the Ziegler spectrum over an arbitrary B\'ezout domain $B$, $\Zg_B$,
was obtained. However, as it was mentioned there, this information is expected to be elaborated for particular
classes of Bezout domains. One example of this refinement was given in \cite{P-T14}, and some information on
the structure of the Ziegler spectrum of the ring of algebraic integers is contained in a recent preprint
\cite{LPT}.

In this note we will investigate this topological space for the prominent example of a B\'ezout domain: the ring
$E= E(\C)$ of complex valued entire functions. This was the question that Luigi Salce once asked Ivo Herzog.
We will show that the points of $\Zg_E$ are given by triples $(U,I,J)$, where $U$ is an ultrafilter on an
(at most countable) nowhere dense subset $D$ of $\C$, and $I, J$ are cuts on the linearly ordered abelian
semigroup $\N^D/U$. The isolated points of this space correspond to principal ultrafilters, hence are of the
form $E_t(k)= E/(z-t)^k E$, where $t\in \C$ and $k\geq 1$, and they form a dense subset in the Ziegler spectrum.

We will also describe the closed points of $\Zg_E$ as the finite length points $E/M^k$ for maximal ideals $M$
of $E$ (for instance the modules $E_t(k)$ are such), plus the generic points. Here generic means the quotient
field of a prime factor $E/P$ of $E$, in particular the quotient field $Q$ of $E$, which is the field of
meromorphic functions.

We will also show that the Cantor--Bendixson derivative $T'_E$ of the theory $T_E$ of $E$-modules coincides
with the theory of $E_S$-modules, where $S$ is the multiplicatively closed set consisting of nonzero polynomials.
There are no isolated points on the next level, i.e. the first $\CB$-derivative $\Zg'_E$ is a perfect space.
Furthermore, no nontrivial interval in the lattice of positive primitive formulae of $T'_E$ is a chain, hence
this theory lacks both breadth and width. Further we will show that the pure injective hull of $E_S$ is a
superdecomposable module $E$-module. Finally we will see that the closed points in $\Zg'_E$ are generics.

This paper paves the way for some future applications, say to the proof of decidability of the theory of
$E$-modules. However we decided to postpone these developments, but spell out now, in a meticulous way, the
facts on the Ziegler spectrum of B\'ezout domains which occur when investigating this space for $E$. We hope
that they will be useful when studying the model theory of modules over other examples of B\'ezout domains which
occur in analysis, say, the ring of real analytic functions.

Due to the fact that none of the authors is an expert in complex analysis we will be quite insisting in collecting
and explaining some facts in this area, which are well known to experts, but were difficult to find for us. To
make up for this we will also include precise references and explanations (mostly taken from \cite{P-T15}) from
model theory of modules over B\'ezout domains.

\section{The ring of entire functions}\label{S-entire}

Let $\C$ denote the field of complex numbers. Recall that a function $f: \C\to \C$ is said to be \emph{entire}, if
it is given by an everywhere convergent power series $\sum_{n=0}^{\fty} a_n z^n$ with complex coefficients $a_n$,
i.e.\ $\lim\limits_{n\to \fty} \sq[n]{|a_n|}= 0$. For instance, the exponential function
$e^z= \sum_{n=0}^{\fty} \frac{z^n}{n!}$ is entire, so as the sine function
$\sin z= \sum_{n=0}^{\fty} \frac{z^{2n+1}}{(2n+1)!}$. More examples and explanations can be found in any complex
analysis textbook, say \cite{Ahl} or \cite{S-Z}. For instance, each entire function is differentiable, and its
derivative is of the same kind.

If we add or multiply entire functions pointwise, the result is likewise. Thus, entire functions form a commutative
ring $E$ whose unity is the constant function of value $1$. We will be interested in ring theoretic properties
of $E$. Note that the cardinality of $E$ is the continuum ${\bf c}= 2^{\aleph_0}$.

Let $Z(f)= \{z\in \C\mid f(z)=0\}$ denote the \emph{zero set} of an entire function $f$. Then $Z(f)$ is at most
countable set whose only possible accumulation point is at infinity. For instance, this is the case for the sine
function: $Z(\sin z)$ consists of points $\pi k$, $k\in \Z$. On the other hand, the zero set of each polynomial
is finite, and the zero set of the exponential function is empty. If $z\in \C$ then $\mu_f(z)$ will denote the
\emph{multiplicity} of $z$ as a root of $f$, which is a natural number, in particular $\mu_f(z)=0$ iff $z$ is not
a zero of $f$. Thus to each entire $f$ we assign the \emph{multiplicity function} $\mu_f: Z(f)\to \N$.
Usually the zeroes of an entire function $f$ are counted as $z_0, z_1, z_2, \dots$ such that
$|z_k|\leq |z_{k+1}|$, and each $z_k$ occurs only finitely many times.

If $f, g\in E$ then clearly $Z(fg)= Z(f)\cup Z(g)$ and, for any $z$, its multiplicity $\mu_{fg}(z)$ is the sum
of multiplicities $\mu_f(z)$ and $\mu_g(z)$. Since the zero set of an entire function is nowhere dense, $E$ is a
\emph{domain}: $fg\neq 0$ for nonzero $f, g\in E$.

The next fact shows that the zero set and the multiplicity of an entire function determine the principal ideal
it generates.

\begin{fact}\label{princ}
Let $f, g\in E$. Then $g\in fE$ if and only if $Z(f)\seq Z(g)$ and $\mu_f(z)\leq \mu_g(z)$ for each $z\in Z(f)$.
In particular $f\in E$ is invertible if and only if $Z(f)= \eset$.
\end{fact}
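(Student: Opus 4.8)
The plan is to reduce the statement to the classical Weierstrass factorization theorem, which guarantees that for any prescribed zero set $D$ (at most countable, with no finite accumulation point) and any prescribed multiplicity function $\mu\colon D\to\N$, there exists an entire function whose zero set is exactly $D$ with those multiplicities. We will use this in the direction that matters: given the ``ratio'' data $Z(g)\sm Z(f)$ together with the shifted multiplicities, we can realize it by an entire function.

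First I would prove the easy implication. If $g=fh$ for some $h\in E$, then by the multiplicativity of zero sets and the additivity of multiplicities recalled just before the statement, $Z(g)=Z(f)\cup Z(h)\sep Z(f)$, and for $z\in Z(f)$ we have $\mu_g(z)=\mu_f(z)+\mu_h(z)\geq\mu_f(z)$. This gives the ``only if'' direction with no analysis beyond what is already stated.

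For the ``if'' direction, suppose $Z(f)\seq Z(g)$ and $\mu_f(z)\leq\mu_g(z)$ on $Z(f)$. The natural candidate for the quotient is the meromorphic function $h\colon\C\to\C$ defined pointwise by $h(z)=g(z)/f(z)$ away from $Z(f)$; the content of the argument is that $h$ extends to an entire function. I would argue locally: at a point $z_0\notin Z(f)$, $h$ is holomorphic as a quotient of holomorphic functions with nonvanishing denominator; at a point $z_0\in Z(f)$, write $f(z)=(z-z_0)^{\mu_f(z_0)}f_1(z)$ and $g(z)=(z-z_0)^{\mu_g(z_0)}g_1(z)$ with $f_1,g_1$ holomorphic near $z_0$ and $f_1(z_0)\neq 0$, so that $h(z)=(z-z_0)^{\mu_g(z_0)-\mu_f(z_0)}\,g_1(z)/f_1(z)$; since the exponent is $\geq 0$, the singularity at $z_0$ is removable. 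Thus $h$ is holomorphic on all of $\C$, i.e.\ entire, and $g=fh$ by construction, which shows $g\in fE$. The main obstacle is purely expository: making precise the removability of the singularities and the local factorization $f(z)=(z-z_0)^{\mu_f(z_0)}f_1(z)$ (which is just the statement that $\mu_f(z_0)$ equals the order of vanishing of the holomorphic germ of $f$ at $z_0$), and deciding how much of this standard complex-analytic material to spell out given the paper's stated intention of being generous with such background.

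The last sentence is then immediate: $f$ is invertible in $E$ iff $1\in fE$ iff (applying the equivalence with $g=1$) $Z(f)\seq Z(1)=\eset$, i.e.\ $Z(f)=\eset$.
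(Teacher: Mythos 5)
Your proof is correct, and it is worth noting that it is actually \emph{more elementary} than the route the paper indicates. The paper states no proof of Fact~\ref{princ}; it only remarks that ``the proof requires Weierstrass' theorem'' and recalls the factorization $f=e^g\prod_k E_k(z/z_k)$, suggesting an argument in which one first manufactures, via Weierstrass, an entire $h$ whose zero set is $Z(g)$ with multiplicities $\mu_g-\mu_f$ on $Z(f)$ and $\mu_g$ elsewhere, and then observes that $g/(fh)$ is a nowhere-vanishing entire function. Your argument bypasses the global construction entirely: the quotient $h=g/f$ is holomorphic off the discrete set $Z(f)$, and the hypothesis $\mu_f(z_0)\le\mu_g(z_0)$ makes each point of $Z(f)$ a removable singularity by the local factorization $f=(z-z_0)^{\mu_f(z_0)}f_1$, $g=(z-z_0)^{\mu_g(z_0)}g_1$ with $f_1(z_0)\neq 0$. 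This uses only local power-series theory, which is exactly the content of the definition of $\mu_f$, so nothing is circular; the only cosmetic issue is that your opening paragraph announces a reduction to Weierstrass' factorization theorem which your actual argument never invokes (Weierstrass is genuinely needed later, for Facts~\ref{wnk}--\ref{adeq}, but not here). You might also dispose in one line of the degenerate case $f\equiv 0$, where $Z(f)=\C$ forces $g\equiv 0$ and the claim is trivial. The easy direction and the deduction of the invertibility criterion from the case $g=1$ are exactly as one would expect.
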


The proof of this result requires Weierstrass' theorem on functions with a prescribed set of zeroes. Namely, for
each $k$ define the \emph{Weierstrass primary factor} $E_k(z)= (1-z) \exp(z+ z^2/2!+ \dots+ z^k/k!)$, which is
an entire function with $z= 1$ as its only (simple) zero.  Let $\{z_k\}$ be an absolute value nondecreasing
sequence of complex numbers such that each $z_k$ occurs $m_k$ times. Then the infinite product
$\prod_{k=0}^{\fty} E_k(z/z_k)$ is an entire function whose zero set consists of the $z_k$ with multiplicity
$m_k$. Further, if $f$ is any function with this property, then, by \emph{Weierstrass' factorization theorem},
$f= e^g\cdot \prod_{k=0}^{\fty} E_k(z/z_k)$ for an entire function $g$.

A useful variant of this result is the following.

\begin{fact}\label{wnk}(see \cite[Prop. 1.1]{G-H})
Let $\{z_k\}$ be an absolute value nondecreasing sequence of complex numbers with no finite accumulation point.
Let $\{w_{nk}\}$ be a double sequence of complex numbers. Then there exists an entire function $f$ such that
$f^{(n)}(z_k)= w_{nk}$ for all $k, n$.
\end{fact}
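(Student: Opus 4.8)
The plan is to reduce the double-sequence interpolation problem to the single-sequence case (ordinary Weierstrass/Mittag-Leffler type interpolation) by a standard ``divided patching'' argument. Fix the sequence $\{z_k\}$, which by hypothesis is discrete with no finite accumulation point, so one can choose pairwise disjoint small closed disks $D_k$ centered at $z_k$. On each $D_k$ one wants $f$ to agree, up to some order, with a prescribed polynomial jet; the obstruction to gluing these local data into a single entire function is exactly the failure of $\C$ to be ``nice'' in the sheaf-cohomological sense, which for the ring of entire functions is resolved by the Weierstrass and Mittag-Leffler theorems.

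First I would produce, for each $k$, a polynomial $P_k(z)$ with $P_k^{(n)}(z_k)=w_{nk}$ for the finitely many $n$ we need to control at the point $z_k$ (since the problem asks for $f^{(n)}(z_k)=w_{nk}$ for all $n$, I would actually organize the target data at $z_k$ as the full formal Taylor series $\sum_n \frac{w_{nk}}{n!}(z-z_k)^n$, but truncate to order $N_k$ when building $P_k$ and correct the tail afterwards — or, more cleanly, follow the approach in \cite{G-H} and handle all orders at once via a limiting/summation procedure). Next, using Fact~\ref{princ} (Weierstrass' factorization theorem), choose an entire function $h$ whose zero set is exactly $\{z_k\}$ with each $z_k$ a zero of sufficiently high multiplicity $m_k$ (to be specified, larger than the order of jet we must control at $z_k$, and growing fast enough to guarantee convergence below). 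Then I would seek $f$ in the form $f(z)=\sum_k P_k(z)\, h(z)/h_k(z)\cdot g_k(z)$, where $h_k$ is the local factor of $h$ at $z_k$ and $g_k$ is an auxiliary entire correction chosen so that the $k$-th summand has the right jet at $z_k$ and is small off $D_k$; the functions $h/h_k$ vanish to high order at every $z_j$ with $j\ne k$, so the summands do not interfere, and convergence of the series is arranged by taking the $g_k$ (equivalently the disks $D_k$ and multiplicities $m_k$) so that $\|k\text{-th term}\|_{|z|\le R}\le 2^{-k}$ for each fixed $R$. Uniform convergence on compacts then gives an entire $f$, and termwise differentiation at $z_k$ leaves only the $k$-th term's contribution, yielding $f^{(n)}(z_k)=P_k^{(n)}(z_k)=w_{nk}$.

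The cleanest packaging is really the Mittag-Leffler route: consider the meromorphic function obtained by dividing the desired jet data by $h$, realize its prescribed principal parts by Mittag-Leffler (which is itself a consequence of Weierstrass' theorem over $E$, as $E$ is a B\'ezout domain / the relevant Cousin problem is solvable), and multiply back by $h$. I expect the main technical obstacle to be the convergence bookkeeping: one must choose the multiplicities $m_k$ of $h$ at $z_k$, and the radii of the disks $D_k$, simultaneously fast/small enough that the tail of the series is dominated on every disk $|z|\le R$ — this is the one place where the hypothesis ``no finite accumulation point'' (hence $|z_k|\to\infty$) is essential, since it lets the required smallness be achieved for all but finitely many $k$ on any fixed compact set. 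Everything else — the existence of the polynomial jets $P_k$, the non-interference of the summands, termwise differentiation — is routine. Since the statement is quoted from \cite[Prop.~1.1]{G-H}, in the paper itself one may simply cite it; the sketch above indicates how it follows from the Weierstrass machinery already recalled in Fact~\ref{princ}.
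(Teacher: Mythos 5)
The paper does not actually prove this Fact: it is quoted from \cite[Prop. 1.1]{G-H} and is used later (in the proof of Fact \ref{bez}) only in the form where \emph{finitely many} derivatives are prescribed at each point $z_k$. Your outline --- Taylor polynomials $P_k$ at each $z_k$, a Weierstrass product $h$ vanishing to high enough order at every $z_j$ with $j\ne k$ so that the local data decouple, and the Mittag--Leffler device (realize the finite principal parts of $P_k/h$ and multiply back by $h$) to force convergence on compacts --- is exactly the standard proof of that finite-jet interpolation theorem, and it is correct as far as it goes. (Minor point: the existence of $h$ comes from Weierstrass' factorization theorem, discussed after Fact \ref{princ}, not from Fact \ref{princ} itself, which is the divisibility criterion.)

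The genuine gap is the one step you defer: ``truncate to order $N_k$ \dots and correct the tail afterwards --- or \dots handle all orders at once via a limiting/summation procedure.'' No such correction is possible, because the statement with $n$ ranging over all of $\N$ is false for arbitrary data. For any entire $f$ and any point $z_k$, the Taylor series $\sum_n \frac{f^{(n)}(z_k)}{n!}(z-z_k)^n$ has infinite radius of convergence, so necessarily $\lim_n \bigl|f^{(n)}(z_k)/n!\bigr|^{1/n}=0$; choosing, say, $w_{nk}=(n!)^2$ for all $n$ violates this constraint at every $z_k$, so no entire function can realize such data, and no limiting procedure will produce one. The hypothesis must therefore be read with $n$ restricted to $0\le n\le p_k$ for some finite $p_k$ depending on $k$ (which is how the Fact is applied in the proof of Fact \ref{bez}, with $p_k=\mu_f(z_k)-1$), or else $\{w_{nk}\}$ must satisfy a growth condition making each formal jet the Taylor series of an entire function. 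In short: your machinery proves the correct (finite-jet) theorem, but the ``for all $k,n$'' clause as quoted cannot be rescued, and you should flag it rather than promise to fix it by a tail correction.
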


Recall that a commutative domain $B$ is said to be \emph{B\'ezout}, if each 2-generated ideal of $B$ is
principal. This amounts to the so-called \emph{B\'ezout identities}: for each $0\neq a, b\in B$ there are
$c, r, s, u, v$ such that $c= ar+ bs$ and $a= cu$, $b= cv$, hence $c$ generates the ideal $aB+ bB$. Then $c$ is
a \emph{greatest common divisor} of $a$ and $b$, written $\gcd(a,b)$, which is defined up to a multiplicative
unit. Similarly, the notion of a \emph{least common multiple}, $\lcm(a,b)$, makes perfect sense, and
(with a suitable choice of units) we obtain the formula $ab= \gcd(a,b)\cdot \lcm(a,b)$.

The following fact goes back to Weierstrass, but was brought into prominence by Helmer \cite{Hel40}. We will
sketch its proof, borrowed from elsewhere.

\begin{fact}\label{bez}
The ring $E$ of entire complex valued functions is a B\'ezout domain.
\end{fact}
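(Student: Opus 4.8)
The plan is to establish the B\'ezout identity directly: given nonzero $a,b\in E$, produce $c\in E$ with $c=ar+bs$ for some $r,s\in E$ and with $c\mid a$, $c\mid b$. By Fact~\ref{princ}, divisibility in $E$ is controlled entirely by zero sets and multiplicities, so the natural candidate for $c$ is the function whose zero set is $Z(a)\cap Z(b)$ with multiplicity $\min(\mu_a(z),\mu_b(z))$ at each such $z$; Weierstrass' factorization theorem guarantees such a $c$ exists, and Fact~\ref{princ} then gives $c\mid a$ and $c\mid b$, say $a=ca'$, $b=cb'$. Writing $a=ca'$, $b=cb'$, the functions $a'$ and $b'$ have disjoint zero sets, so it remains to show that two entire functions with no common zero generate the unit ideal, i.e.\ $1=a'r+b's$ for suitable $r,s\in E$; multiplying through by $c$ then yields $c=ar+bs$, completing the argument.

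So the heart of the matter is the coprime case: if $f,g\in E$ have $Z(f)\cap Z(g)=\eset$, find $r,s\in E$ with $fr+gs=1$. The idea is to look for $s$ of the form $s=e^h$ (so $s$ is a unit and, in particular, nowhere zero) and then solve $fr=1-ge^h$ for an entire $r$. By Fact~\ref{princ}, $r=(1-ge^h)/f$ is entire precisely when $1-ge^h$ vanishes at each $z_k\in Z(f)$ to order at least $\mu_f(z_k)$; equivalently, at each zero $z_k$ of $f$ the function $e^h$ must agree with $1/g$ together with its first $\mu_f(z_k)-1$ derivatives. Since $g(z_k)\neq 0$ for $z_k\in Z(f)$, the quantity $1/g$ is holomorphic near each $z_k$, and we may freely prescribe the Taylor coefficients of $h$ (hence of $e^h$) at each $z_k$ to whatever finite jet is required: this is exactly the interpolation statement of Fact~\ref{wnk} (the $z_k$ form an absolute-value nondecreasing sequence with no finite accumulation point, as the zero set of $f$ is nowhere dense). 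Thus an entire $h$ with the prescribed jets at all $z_k$ exists, and then $r:=(1-ge^h)/f$ is entire by Fact~\ref{princ}, giving $fr+g e^h=1$ as desired.

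Assembling the pieces: set $c$ to be the gcd-candidate from Weierstrass, write $a=ca'$, $b=cb'$ with $a',b'$ coprime, obtain $a'r+b's=1$ from the coprime case, and multiply by $c$ to get $c=ar+bs$ while retaining $c\mid a$ and $c\mid b$. Hence every $2$-generated ideal $aE+bE=cE$ is principal, so $E$ is B\'ezout.

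The main obstacle is the coprime step, and specifically the bookkeeping that translates ``$r=(1-ge^h)/f$ is entire'' into a jet-interpolation problem for $h$: one must check that matching $e^h$ to $1/g$ up to order $\mu_f(z_k)-1$ at each $z_k$ is equivalent to matching $h$ to a specific finite Taylor jet there (using that $\exp$ is a local biholomorphism near any point, since $e^w\neq 0$), and then invoke Fact~\ref{wnk} to realize all these jets simultaneously by a single entire $h$. Everything else -- existence of $c$, the reduction to the coprime case, and the final multiplication by $c$ -- is routine given Facts~\ref{princ} and~\ref{wnk}.
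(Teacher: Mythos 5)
Your proposal is correct and follows essentially the same route as the paper: reduce to the coprime case via the gcd candidate with zero set $Z(a)\cap Z(b)$ and minimal multiplicities, then use the interpolation theorem (Fact~\ref{wnk}) to force $1-gv$ to vanish to order $\mu_f(z_k)$ at each $z_k\in Z(f)$. The only (harmless) variation is that you insist on $v=e^h$ being a unit and interpolate the jets of $h$, which costs you the extra local-biholomorphism step for $\exp$; the paper simply prescribes the values and derivatives of $v$ itself at each $z_k$ and applies Fact~\ref{wnk} directly to $v$.
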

\begin{proof}
We look for a greatest common denominator of $f, g\in E$, i.e.\ an element $h\in fE+ gE$ such that $f, g\in hE$.
We may assume that $f, g$ are nonzero and not invertible. It follows easily that $Z(h)= Z(f)\cap Z(g)$, and the
multiplicity of each $z\in Z(h)$ equals the minimum of $\mu_f(z)$ and $\mu_g(z)$. Choose any such $h$. Since it
divides both $f$ and $g$, canceling by $h$, we may assume that $Z(f)\cap Z(g)= \eset$, hence we have to solve
the equation $fu+ gv= 1$.

In fact, it suffices to find $v\in E$ such that $Z(f)\seq Z(1- gv)$ and $\mu_f(z)\leq \mu_{1-gv}(z)$ for each
$z\in Z(f)$, - then $u$ exists by Fact \ref{princ}. For each $z\in Z(f)$ we will specify few values of $v$ and its
derivatives, and then construct $v$ using Fact \ref{wnk}.

Thus choose $z\in Z(f)$ and assume (for simplicity) that $\mu_f(z)= 3$. Using the standard interpretation of multiple
roots in terms of common roots with derivatives, we need to satisfy the following equalities:

$$
(1- gv)(z)= 0, \qquad (1-gv)'(z)= 0\qquad \text{and} \qquad (1-gv)''(z)= 0\,.
$$

The first condition reads $1= g(z) v(z)$. From $f(z)=0$ it follows $g(z)\neq 0$, hence define $v(z)= - 1/g(z)$. To
satisfy the second and the third equations we set $v'(z)= - g'(z)v(z)/g(z)$ and
$v''(z)= (-g''(z) v(z)- 2g'(z)v'(z))/g(z)$.
\end{proof}

We will need one more property of $E$. Recall that elements $a, b$ of a B\'ezout domain $B$ are called
\emph{coprime} if $\gcd(a, b)= 1$, that is, if $aB+ bB= B$ holds. Following \cite[p. 118]{F-S} we say that $B$
is \emph{adequate} if, for all nonzero noninvertible $a, b\in B$, there is a factorization $a= cd$ such that
$\gcd(c,b)= 1$ and, for each noninvertible divisor $d'$ of $d$, the elements $d'$ and $b$ are not coprime.

In the ring of entire functions the latter means that $Z(d)\seq Z(b)$.

\begin{fact}\label{adeq}
$E$ is an adequate B\'ezout domain.
\end{fact}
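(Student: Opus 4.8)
The plan is to translate adequacy into the language of zero sets and then invoke the Weierstrass factorization theorem. First I would recall, from the proof of Fact~\ref{bez}, that a greatest common divisor $h$ of nonzero $p,q\in E$ satisfies $Z(h)=Z(p)\cap Z(q)$ with $\mu_h(z)=\min(\mu_p(z),\mu_q(z))$; combined with the last assertion of Fact~\ref{princ}, this shows that $p$ and $q$ are coprime if and only if $Z(p)\cap Z(q)=\eset$. Using this dictionary I would verify the remark preceding the statement: for a divisor $d$ of $a$, the condition ``every noninvertible divisor $d'$ of $d$ is non-coprime to $b$'' is equivalent to $Z(d)\seq Z(b)$. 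Indeed, if $Z(d)\seq Z(b)$ and $d'\mid d$ is noninvertible, then $\eset\neq Z(d')\seq Z(d)\seq Z(b)$, so $d'$ and $b$ are not coprime; conversely, if some $z_0\in Z(d)\sm Z(b)$, then the linear polynomial $z-z_0$ divides $d$ by Fact~\ref{princ} and is coprime to $b$. Hence it suffices to factor $a=cd$ with $Z(c)\cap Z(b)=\eset$ and $Z(d)\seq Z(b)$.

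To produce such a factorization, put $D=Z(a)\cap Z(b)$. Being a subset of the discrete set $Z(a)$, the set $D$ is at most countable with no finite accumulation point, so by Weierstrass' factorization theorem there is an entire $d$ with $Z(d)=D$ and $\mu_d(z)=\mu_a(z)$ for all $z\in D$ (take $d=1$ if $D=\eset$). Then $d\mid a$ by Fact~\ref{princ}; set $c=a/d\in E$. Comparing zero sets and multiplicities on both sides of $a=cd$ gives $Z(c)=Z(a)\sm D=Z(a)\sm Z(b)$, whence $Z(c)\cap Z(b)=\eset$, i.e. $\gcd(c,b)=1$; and $Z(d)=D\seq Z(b)$. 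By the first paragraph this is exactly the factorization required by the definition of adequacy.

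The argument is essentially bookkeeping once the equivalence ``coprime $\Leftrightarrow$ disjoint zero sets'' and Fact~\ref{princ} are in hand, so there is no real obstacle. The only points that call for a little care are that the Weierstrass product can be arranged to match the multiplicities of $a$ on $D$ exactly --- so that the complementary quotient $c=a/d$ is again entire rather than merely meromorphic --- and the degenerate cases $D=\eset$ (where $d$ is a unit and the divisor condition on $d$ holds vacuously) and $Z(a)\seq Z(b)$ (where $c$ is a unit), both of which are immediate.
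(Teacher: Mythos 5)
Your proof is correct and follows essentially the same route as the paper: split $a=cd$ by letting one factor carry the zeros of $a$ inside $Z(b)$ (with the multiplicities of $a$) and the other the zeros outside, then use ``coprime $\Leftrightarrow$ disjoint zero sets'' via Fact~\ref{princ}. The only difference is cosmetic --- you construct the factor supported on $Z(a)\cap Z(b)$ first, whereas the paper constructs the complementary factor first --- and you usefully spell out the equivalence of the divisor condition with $Z(d)\seq Z(b)$, which the paper only states as a remark.
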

\begin{proof}
Let $f, g\in E$ be nonzero and not invertible. Choose $h\in E$ such that $Z(h)= Z(f)\sm Z(g)$, and
$\mu_h(z)= \mu_f(z)$ for each $z$ in this set, in particular $h$ and $g$ are coprime. Then $f= hu$, where
$Z(u)= Z(f)\cap Z(g)\seq Z(g)$, as desired.
\end{proof}

$$
\begin{tikzpicture}[scale=.7]
\draw node at (-1.2,2) {$_{Z(f)}$} node at (1.4,2) {$_{Z(g)}$};
\begin{scope}[fill= green]
\fill[clip] (-1,0) circle (16mm);
\fill[white] (1,0) circle (16mm);
\draw node at (-1.6,0) {$_{Z(h)}$};
\end{scope}
\fill[clip] (1,0) circle (16mm);
\fill[white] (-1,0) circle (16mm);
\draw node at (0,0) {$_{Z(u)}$} node at (-6.4,0) {$_{Z(h)}$};
\end{tikzpicture}
$$

\vspace{3mm}

A (commutative) domain $V$ is said to be a \emph{valuation domain}, if its ideals are linearly ordered by inclusion.
More generally, a domain $R$ is said to be a \emph{Pr\"ufer domain} if, for each prime ideal $P$, the localization
$R_P$ is a valuation domain, - see \cite[Ch. 3]{F-S} for equivalent definitions and properties. Since each B\'ezout
domain $B$ is a Pr\"ufer domain, it follows that any prime ideals $P_1, P_2$ of $B$ included in a maximal ideal
$M$ are comparable, i.e.\ there is no following inclusion diagram for prime ideals.

$$
\vcenter{%
\def\objectstyle{\scriptstyle}
\def\labelstyle{\scriptstyle}
\xymatrix@C=10pt@R=20pt{%
&*+={\circ}\ar@{-}[rd]\ar@{-}[ld]\ar@{}+<0pt,10pt>*={M}&\\
*+={\circ}\ar@{}+<-10pt,0pt>*={P_1}&&*+={\circ}\ar@{}+<10pt,0pt>*={P_2}
}}
$$

\vspace{3mm}

For adequate B\'ezout domains no opposite inclusion diagram occurs.

\begin{fact}\label{adeq-fork}(see \cite[Thm. 4]{Hel43})
Let $B$ be an adequate B\'ezout domain. Then every nonzero prime ideal $P$ is contained in a unique maximal ideal,
in particular $B/P$ is a valuation domain.
\end{fact}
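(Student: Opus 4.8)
The plan is to argue by contradiction, exploiting that adequacy, applied to a single well-chosen pair, already forces a collapse. Suppose that some nonzero prime ideal $P$ is contained in two distinct maximal ideals $M_1$ and $M_2$; I will derive a contradiction — concretely, $1$ will end up inside a maximal ideal.

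First I would fix the data. Since $M_1\neq M_2$ are maximal they are comaximal, so $a_0+b_0=1$ for some $a_0\in M_1$, $b_0\in M_2$; then $a_0\notin M_2$, $b_0\notin M_1$, and both $a_0,b_0$ are nonzero and non-invertible. Choose any nonzero $p\in P$; as $p\in M_1\cap M_2$ it too is non-invertible. (If $P$ happens to be principal one can finish already from the B\'ezout identities by examining $\gcd(p,b_0)$; adequacy is genuinely needed only for non-principal $P$, but the argument below covers every case at once.)

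The crucial move is to apply adequacy to the pair $(p,b_0)$: write $p=cd$ with $\gcd(c,b_0)=1$ and with each non-invertible divisor $d'$ of $d$ failing to be coprime to $b_0$. From $cB+b_0B=B$ and $b_0\in M_2$ we get $c\notin M_2$, hence $c\notin P$; since $P$ is prime and $cd=p\in P$, this forces $d\in P$, so $d\in M_1$ and $d$ is a nonzero non-invertible divisor of $p$. Now put $e=\gcd(d,a_0)$, so $eB=dB+a_0B$. As $d,a_0\in M_1$ we have $e\in M_1$, hence $e$ is a non-invertible divisor of $d$ with $a_0\in eB$. By the choice of the factorisation $e$ and $b_0$ are not coprime, so $eB+b_0B\subseteq N$ for some maximal ideal $N$; then $a_0\in eB\subseteq N$ and $b_0\in N$, whence $1=a_0+b_0\in N$, a contradiction. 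So $P$, which lies in a maximal ideal because it is proper, lies in exactly one, say $M$.

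For the last clause, the quotient $B/P$ is again a B\'ezout domain (images of finitely generated ideals are finitely generated, hence principal, and $P$ prime makes the quotient a domain), and by the above it is local, with maximal ideal $M/P$. A local B\'ezout domain is a valuation domain: dividing the gcd of two elements out of each yields a coprime pair, which in a local ring cannot both lie in the maximal ideal, so one of the two original elements divides the other. Hence $B/P$ is a valuation domain. The only real difficulty is the one already flagged — spotting the pair to hand to adequacy, namely an element of $P$ tested against the comaximality witness $b_0$ and then recombined with $a_0$ through a B\'ezout identity; with that in place, each step is a one-line verification.
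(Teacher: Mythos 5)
Your proof is correct and follows essentially the same strategy as the paper's (which is deferred to the proof of Lemma~\ref{weak}): write $1=a_0+b_0$ with $a_0\in M_1$, $b_0\in M_2$, apply adequacy to a nonzero $p\in P$ against one of the witnesses, and use the non-coprimality clause of the factorization to contradict comaximality. Your version is a mild streamlining — one application of adequacy plus primality of $P$ to get $d\in P$, then $\gcd(d,a_0)$ to force $1\in N$, whereas the paper applies adequacy to both $q_1$ and $q_2$ and produces a common nonunit divisor of the two witnesses (as needed for the weakly prime case of Lemma~\ref{weak}) — and your closing argument that a local B\'ezout domain is a valuation domain matches the paper's remark.
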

Note that the latter statement follows from the former, because each local B\'ezout domain is a valuation domain. For more details on the proof, see a similar situation in Lemma \ref{weak} below (just replace $I$ and $I^{\sh}$ by $P$).

The ring $E$ possesses more remarkable properties, for instance, being adequate, it has \emph{elementary divisors}
and (see \cite{Jen82}) \emph{stable rank 1}, but we will not use these properties in
the paper.

\section{Ideals of B\'ezout domains}\label{S-ideal}

First let us make a trivial remark concerning arbitrary ideals of B\'ezout domains.

\begin{remark}\label{triv}
Let $I$ be an ideal of a B\'ezout domain $B$ and $0\neq a\in I$. Then $b\in I$ if and only if $\gcd(a,b)\in I$.
\end{remark}

Thus to describe $I$ it suffices to look at the divisors $b$ of $a$. For instance, if $B= E$, then the latter
implies that $Z(b)\seq Z(a)$.

We say that a proper ideal $I$ of a B\'ezout domain $B$ is \emph{weakly prime}, if its complement $I^*= B\sm I$ is
closed with respect to least common multiples, i.e.\ $a, b\in I^*$ yields $\lcm(a,b)\in I^*$. Clearly each prime
ideal is weakly prime. On the other hand, for instance, the ideal $z^2 E$ of $E$ is weakly prime but not prime.
These ideals appeared very naturally in \cite{P-T15} and have many nice properties to justify their name. We
mention just a few.

Here is Matlis' like definition, - see \cite{Mat}. Let $I^{\sh}$ consist of elements $r\in B$ such that $ar\in I$
for some $a\in I^*$. For instance $0\in I^{\sh}$, $1\notin I^{\sh}$ and $I\seq I^{\sh}$.

\begin{lemma}\label{sharp}
If $I$ is a weakly prime ideal of a B\'ezout domain $B$, then $I^{\sh}$ is a prime ideal containing $I$. Further
if $P$ is prime ideal, then $P= P^{\sh}$.
\end{lemma}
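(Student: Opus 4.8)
The plan is to verify the three assertions in turn, using the basic B\'ezout identities together with the defining property that $I^*=B\sm I$ is closed under least common multiples.

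First, $I\seq I^{\sh}$ is immediate: if $r\in I$, pick any $a\in I^*$ (which is nonempty since $I$ is proper, so $1\in I^*$ works), and then $ar\in I$ because $I$ is an ideal. Also $1\notin I^{\sh}$, for otherwise $a\cdot 1=a\in I$ for some $a\in I^*$, a contradiction; thus $I^{\sh}$ is proper. The main point is primality: suppose $bc\in I^{\sh}$ but $b\notin I^{\sh}$ and $c\notin I^{\sh}$; I want a contradiction. By definition there is $a\in I^*$ with $abc\in I$. I would then use the B\'ezout structure to pass to gcd's: write $b'=\gcd(a,b)$ and $c'=\gcd(a,c)$. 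The point of doing this is that $b'$ divides $a\in I^*$, and $b'\notin I^{\sh}$ would have to be checked — actually the cleaner route is to argue directly with $ab$ and $ac$. Since $b\notin I^{\sh}$, no element of $I^*$ multiplied by $b$ lands in $I$; in particular $ab\notin I$, i.e.\ $ab\in I^*$. Similarly $ac\in I^*$. Now apply weak primeness: $\lcm(ab,ac)\in I^*$. But $\lcm(ab,ac)=a\cdot\lcm(b,c)$ up to a unit (using $a$ as a common factor and the formula $xy=\gcd(x,y)\lcm(x,y)$), and $a\,\lcm(b,c)$ divides $a\cdot bc = abc$, actually more precisely $\lcm(b,c)\mid bc$ so $a\,\lcm(b,c)\mid abc\in I$, whence $a\,\lcm(b,c)\in I$, contradicting $\lcm(ab,ac)\in I^*$. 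Hence $I^{\sh}$ is prime.

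Second, for the final claim that $P=P^{\sh}$ when $P$ is prime: the inclusion $P\seq P^{\sh}$ is the general fact just proved. For the reverse, take $r\in P^{\sh}$, so $ar\in P$ for some $a\in P^*=B\sm P$. Since $P$ is prime and $a\notin P$, from $ar\in P$ we get $r\in P$. Thus $P^{\sh}\seq P$, giving equality.

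The step I expect to be the main obstacle is the primality argument, specifically getting the divisibility bookkeeping right: one must be careful that $ab, ac$ genuinely lie in $I^*$ (this uses $b,c\notin I^{\sh}$ applied to the witness $a\in I^*$, not to arbitrary elements), and that $\lcm(ab,ac)$ really divides an element of $I$ so that weak primeness produces a contradiction. The identity $\lcm(ab,ac)\sim a\,\lcm(b,c)$ should be justified from $ab\cdot ac=\gcd(ab,ac)\cdot\lcm(ab,ac)$ together with $\gcd(ab,ac)\sim a\,\gcd(b,c)$, all up to units, as in the conventions fixed in the discussion of B\'ezout identities above; alternatively one can avoid $\lcm$ identities entirely by noting directly that $abc$ is a common multiple of $ab$ and $ac$, hence $\lcm(ab,ac)\mid abc\in I$, which already forces $\lcm(ab,ac)\in I$ and contradicts weak primeness. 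This last observation is probably the slickest way to close the argument and sidesteps the obstacle altogether.
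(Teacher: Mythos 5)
There are two problems here, and the first one is fatal as written. Most importantly, you never show that $I^{\sh}$ is closed under addition, and this is the actual content of the lemma: it is the \emph{only} place where the hypothesis that $I$ is weakly prime is used. Given $r_1,r_2\in I^{\sh}$ with witnesses $a_1,a_2\in I^*$ (so $a_ir_i\in I$), one needs a single common witness for $r_1+r_2$; the paper takes $a=\lcm(a_1,a_2)$, which lies in $I^*$ precisely because $I$ is weakly prime, and then $ar_1,ar_2\in I$ gives $a(r_1+r_2)\in I$. Without this step you have not shown that $I^{\sh}$ is an ideal at all, so ``prime ideal'' is not established. The fact that your multiplicative-primality argument nowhere invokes weak primeness should have been a warning that the hypothesis must be doing its work elsewhere.

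Second, the primality argument contains a reversed divisibility inference: from $\lcm(ab,ac)\mid abc$ and $abc\in I$ you conclude $\lcm(ab,ac)\in I$, but ideals are closed under taking multiples, not divisors (indeed $abc=\gcd(b,c)\cdot\lcm(ab,ac)$ up to a unit, so $abc$ is a \emph{multiple} of $\lcm(ab,ac)$ and its membership in $I$ says nothing about that factor). The same error reappears in the ``slicker'' alternative at the end. Fortunately the repair is immediate and makes the whole $\lcm$ detour unnecessary: you correctly establish $ab\in I^*$, and then $(ab)\cdot c=abc\in I$ exhibits $ab$ as a witness showing $c\in I^{\sh}$ by the very definition of $I^{\sh}$, the desired contradiction; note this part needs no weak primeness. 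Your treatment of $I\seq I^{\sh}$, of properness, and of $P=P^{\sh}$ for prime $P$ is fine and matches the paper.
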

\begin{proof}
Clearly $I^{\sh}$ is closed with respect to multiplication by elements of $B$. To check that it is closed with
respect to addition, suppose that $r_1, r_2\in I^{\sh}$, hence $a_i r_i\in I$ for some $a_i\in I^*$. Since $I$ is
weakly prime we conclude that $a= \lcm(a_1, a_2)\in I^*$. Then $ar_i\in I$ for every $i$ yields $a(r_1+ r_2)\in I$,
therefore $r_1+ r_2\in I^{\sh}$.

If $P$ is prime, then the inclusion $P^{\sh}\seq P$ follows from the definition of $P^{\sh}$.
\end{proof}

The following result extends Fact \ref{adeq-fork}, with almost the same proof.

\begin{lemma}\label{weak}
Each nonzero weakly prime ideal $I$ of an adequate B\'ezout domain $B$ is contained in a unique maximal ideal, in
particular $B/I$ is a valuation ring, possibly with zero divisors.
\end{lemma}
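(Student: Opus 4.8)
The plan is to mimic the proof of Fact \ref{adeq-fork}, using the ``sharp'' operation from Lemma \ref{sharp} to reduce the weakly prime case to the prime case. First I would recall that by Lemma \ref{sharp}, $I^{\sh}$ is a prime ideal containing $I$; since $I\neq 0$ and $I\seq I^{\sh}$, the prime $I^{\sh}$ is also nonzero. By Fact \ref{adeq-fork} (applied to $B$, which is an adequate B\'ezout domain, and to the nonzero prime $I^{\sh}$), there is a \emph{unique} maximal ideal $M$ with $I^{\sh}\seq M$. Since $I\seq I^{\sh}\seq M$, this $M$ contains $I$; it remains to show it is the only maximal ideal doing so.

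So suppose $M'$ is a maximal ideal with $I\seq M'$. The key step is to show $I^{\sh}\seq M'$, for then uniqueness of $M$ over $I^{\sh}$ forces $M'=M$. Take $r\in I^{\sh}$, so $ar\in I\seq M'$ for some $a\in I^*=B\sm I$. If $r\notin M'$, then $r$ is a unit modulo $M'$, so from $ar\in M'$ we get $a\in M'$. Now I would use adequacy to derive a contradiction with $a\notin I$: the point is that $a$ and any element of $I$ witnessing membership — more precisely, I would pick a nonzero $b\in I$ and apply the adequacy factorization to the pair $(a,b)$ (after disposing of the trivial cases where $a$ or $b$ is $0$ or a unit, noting $a$ is a nonunit since $a\in M'$ and $b$ a nonunit since $b\in I\subsetneq B$), getting $a=cd$ with $\gcd(c,b)=1$ and every nonunit divisor of $d$ sharing a common factor with $b$. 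Then $cB+bB=B$ gives $sc+tb=1$ for some $s,t$; multiplying by a suitable element and using $b\in I$, $ar\in I$ together with weak primeness (equivalently Remark \ref{triv} and the closure of $I^*$ under $\lcm$) should show $d\in I^{\sh}\cap M'$ and push the obstruction onto the divisor structure of $d$ relative to $b$; iterating, since $\gcd(c,b)=1$, the whole ``$b$-content'' of $a$ sits in $d$, and one concludes $a\in I^{\sh}$ or directly $a\in I$, contradicting $a\in I^*$. (The excerpt explicitly says this is ``almost the same proof'' as Fact \ref{adeq-fork}, so I would simply transcribe that argument with $P$ replaced by $I^{\sh}$ and the hypothesis $a\in I^*$ in place of $a\notin P$.)

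Finally, once $B$ has a unique maximal ideal $M$ over $I$, the localization argument gives the last clause: $B/I$ is a commutative ring with a unique maximal ideal, namely $M/I$ (any maximal ideal of $B/I$ pulls back to a maximal ideal of $B$ containing $I$, which must be $M$), so $B/I$ is local; and a local homomorphic image of a B\'ezout domain is a valuation ring, i.e.\ its ideals are linearly ordered, though now it may have zero divisors since $I$ need not be prime. This is exactly the remark following Fact \ref{adeq-fork}, applied verbatim.

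The main obstacle is the middle step: extracting from the adequacy hypothesis the conclusion that any maximal ideal $M'\sep I$ must contain $I^{\sh}$. The subtlety is that $I^*$ is only closed under $\lcm$, not a multiplicatively closed complement of a prime, so one cannot localize at $I^*$ directly; the adequacy factorization $a=cd$ is precisely the tool that replaces localization here, and getting the induction on divisors of $d$ to terminate (using that $Z(d)\subseteq Z(b)$ in the motivating case $B=E$, and its abstract analogue in general) is where the real work lies. Everything else is bookkeeping already done in Lemma \ref{sharp} and Fact \ref{adeq-fork}.
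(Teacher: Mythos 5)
Your overall reduction is reasonable: granting Lemma \ref{sharp} and Fact \ref{adeq-fork}, the lemma is indeed equivalent to the sub-claim that every maximal ideal $M'\sep I$ contains $I^{\sh}$, and your final paragraph (uniqueness of the maximal ideal makes $B/I$ local, and a local B\'ezout ring is a valuation ring) matches the paper. But the middle step --- the one you yourself flag as ``where the real work lies'' --- is not a proof, and its intended endpoint is wrong. You aim to conclude ``$a\in I^{\sh}$ or $a\in I$, contradicting $a\in I^*$''; however $a\in I^{\sh}$ does \emph{not} contradict $a\in I^*=B\sm I$, since $I^{\sh}$ is in general strictly larger than $I$ (for $I=z^2E$ one has $z\in I^{\sh}\cap I^*$). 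Likewise the intermediate deduction ``$r\notin M'$ forces $a\in M'$'' leads nowhere by itself: $M'$ is far larger than $I$, so $a\in M'\sm I$ is unremarkable. The subsequent ``multiplying by a suitable element \dots iterating \dots one concludes'' is unspecified and carries no termination argument, so the gap is genuine.

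The paper's proof has a different and more economical shape, which you should adopt. Suppose $I\seq M_1\cap M_2$ with $M_1\neq M_2$ maximal; write $1=q_1+q_2$ with $q_i\in M_i\sm M_j$. Fix a single nonzero $p\in I$ and apply adequacy twice, to the pairs $(p,q_1)$ and $(p,q_2)$: this gives $p=r_is_i$ with $\gcd(r_i,q_i)=1$ and every nonunit divisor of $s_i$ non-coprime to $q_i$. From $\gcd(r_i,q_i)=1$ and $q_i\in M_i$ one gets $r_i\notin M_i$, hence $r_i\notin I^{\sh}$, and primeness of $I^{\sh}$ (Lemma \ref{sharp}) forces $s_i\in I^{\sh}$. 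Then $s=\gcd(s_1,s_2)\in I^{\sh}$ is a nonunit dividing both $s_1$ and $s_2$; the adequacy clause for $s_1$ gives a nonunit $w_1=\gcd(s,q_1)$, and since $w_1$ is also a nonunit divisor of $s_2$, the clause for $s_2$ gives a nonunit $\gcd(w_1,q_2)$ dividing both $q_1$ and $q_2$ --- contradicting $q_1+q_2=1$. Note that the contradiction is extracted from the coprimality of $q_1$ and $q_2$, not from membership in $I^*$; that is the idea missing from your sketch, and it is also where weak primeness enters (via $I^{\sh}$ being a prime ideal, so that $s_1,s_2$ and their gcd land in it).
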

\begin{proof}
Suppose that $I$ is contained in different maximal ideals $M_1, M_2$, hence $1= q_1+ q_2$ for some
$q_1\in M_1\sm M_2$ and $q_2\in M_2\sm M_1$.

$$
\vcenter{%
\def\objectstyle{\scriptstyle}
\def\labelstyle{\scriptstyle}
\xymatrix@C=10pt@R=20pt{%
*+={\circ}\ar@{}+<-10pt,2pt>*={M_1}&&*+={\circ}\ar@{}+<10pt,2pt>*={M_2}\\
&*+={\circ}\ar@{-}[ru]\ar@{-}[lu]\ar@{}+<0pt,-8pt>*={P}&
}}
$$

\vspace{4mm}

Choose a nonzero $p \in I$. Applying the definition of being adequate to $p$ and $q_1$ we get a factorization
$p= r_1 s_1$, where $\gcd(r_1,q_1)= 1$, and $q_1$ is not coprime to any nonunit dividing $s_1$. From
$\gcd(r_1, q_1)= 1$ and  $q_1\in M_1$ it follows that $r_1\notin M_1$, in particular $r_1\notin I^{\sh}$ . Since $I^{\sh}$ is prime, we derive $s_1\in I^{\sh}$. Similarly $p= r_2 s_2$, where $r_2\notin I^{\sh}$, $s_2\in I^{\sh}$, and $q_2$ is not coprime to any nonunit dividing $s_2$.

From $s_1, s_2\in I^{\sh}$ we conclude that $s= \gcd(s_1, s_2)\in I^{\sh}$. Applying the above condition to $s$ and $q_1$,
and then involving $q_2$, we construct a nonunit dividing both $q_1$ and $q_2$, a clear contradiction.
\end{proof}

The description of maximal ideals of $E$ is well known, and there is a reasonably good (see some comments below)
description of prime ideals of $E$. We approach this classification backwards, first describing weakly prime ideals.
Because it involves ultrafilters on countable sets, we will introduce this terminology.

\subsection{Ultrafilters}\label{ultra}

Let $D$ be a nonempty at most countable set (mostly a subset of $\C$). Recall that a nonempty collection of
subsets of $D$ is said to be a \emph{filter}, if 1) $\eset\notin U$; 2) $U$ is \emph{upward closed}, i.e.\ if
$K\seq L\seq D$, then $K\in U$ implies $L\in U$; 3) $U$ is closed with respect to finite intersections.

The set of filters on $D$ is partially ordered by inclusion, and maximal elements of this ordering are called
\emph{ultrafilters}. In fact $U$ is an ultrafilter iff for any partition $D= K_1\cup K_2$ either $K_1\in U$ or
$K_2\in U$ holds.

For instance, for each $d\in D$, there exists a \emph{principal ultrafilter} $U_d$, namely $K\in U_d$ iff
$d\in K$. If $D$ is finite, then each ultrafilter on $D$ is of this form. Otherwise $U$ is \emph{non-principal},
in particular each cofinite set belongs to $U$, therefore $U$ is not closed with respect to countable
intersections. Further, by \cite[pp. 255--256]{Jech}, there are $2^{\, \bf c}$ ultrafilters on a countable (infinite) $D$.

Let $A$ be any algebraic system in a countable language and let $U$ be an ultrafilter on $D$. Then the elements
of \emph{ultraproduct} $A_U= A^D/U$ are equivalence classes of functions $\mu: D\to A$. Here the functions $\mu$
and $\mu'$ are \emph{equivalent} if they take the same values on a \emph{large} subset of $D$, i.e.\ if the
equalizer $\{d\in D\mid \mu(d)= \mu'(d)\}$ is in $U$.

All operations and relations are naturally transferred to $A_U$, and $A$ is embedded into $A_U$ diagonally. From
\cite[Cor. 4.1.13]{C-K} it follows that $A_U$ is an elementary extension of $A$, in particular $A$ and $A_U$
are elementary equivalent. If $U= U_d$ is principal, then the evaluation map $f\mapsto f(d)$ defines an isomorphism
from $A_U$ onto $A$. Otherwise these algebraic systems are not isomorphic, and (see \cite[Thm. 6.1.1]{C-K}) $A_U$
is \emph{$\om_1$-saturated} of cardinality $\bf c$.
Further, if we assume the Continuum Hypothesis, $\CH$, then
(see \cite[Thm. 6.1.1]{C-K}) the isomorphism type of $A_U$ does not depend on $U$, as soon as $U$ is non-principal.

Our main interest will be when $A= \N$ considered as a linearly ordered abelian semigroup (with respect to addition). If $D$ and $U$ are as above, then let $\mN= \mN_U$ denote the ultraproduct $\N^D/U$. Thus, if $U$ is principal, then $\mN\cong \N$, and otherwise $\mN= \N^{\,D}/U$ is an $\om_1$-saturated linearly ordered abelian semigroup of cardinality $\bf c$.
For instance (taking $D = \omega$ for simplicity), if $U$ is not principal, then the function $\mu(n)= 10n+ 1$ is less than the function $\mu'(n)= n^2$ in
$\mN$, because $10n+1$ is less than $n^2$ for
$n\geq 11$.

As a linear ordering $\mN_U$ contains a least (but no largest) element, and has a lot of simple intervals. Namely,
if $\mu\in \mN_U$, then the function $\mu'(d)= \mu(d)+1$ \emph{covers} $\mu$, i.e.\ $\mu< \mu'$ in $\mN_U$ and there
is no $\mu''$ strictly between $\mu$ and $\mu'$. Further it is easily seen that for $\mu\leq \mu'\in \mN_U$, the
interval $[\mu, \mu']$ is of \emph{finite length} iff the difference $\mu'-\mu$ is \emph{bounded} by some $k$, i.e.
the set $\{d \in D \mid \mu'(d) - \mu(d) \leq k\}$ is large.

We define the functions $\mu, \mu'\in \mN$ to be \emph{finite equivalent}, written $\mu\sim_{fin} \mu'$, if the
interval between $\mu$ and $\mu'$ (or vice versa) is of finite length. Each equivalence class of
$\sim_{fin}$
in $\mN$ is countable. Because $\mN$ is $\om_1$-saturated, it follows that the factor set $\mN'= \mN/\sim_f$ is
a linear ordering of cardinality $\bf c$ which is \emph{dense}, i.e.\ for each $a< b$ in this chain there exists
$c$ such that $a< c< b$.

\subsection{Weakly prime ideals}

In what follows we will use the approach from Gillman--Jerison book \cite{G-J}.

Let $I$ be a nonzero ideal of $E$. Choose $0\neq f\in I$, hence the zero set $D= Z(f)$ is at most countable
and nowhere dense. Let $U_f$ consist of subsets of $D$ of the form $Z(g)$, where $g\in I$. Using the divisibility
properties of entire functions it is easily checked that $U_f$ is a filter on $D$. Then we obtain the following
dichotomy. If there is a $g\in I$ with the smallest $Z(g)\in U_f$, then $I$ is called \emph{fixed}, otherwise $I$
is said to be \emph{free}.

\begin{lemma}\label{max-u}
Let $I$ be a nonzero weakly prime ideal of $E$, $0\neq f\in I$ and $D= Z(f)$. Then $U_f$ is an ultrafilter on $D$.
\end{lemma}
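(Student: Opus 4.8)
The plan is to show that the filter $U_f$ is in fact \emph{maximal} among filters on $D$, by proving the partition criterion: for every partition $D = K_1 \cup K_2$ into disjoint subsets, at least one of $K_1, K_2$ lies in $U_f$. First I would use Weierstrass' factorization theorem (as recalled after Fact \ref{princ}) to split $f$: since $D = Z(f)$ is an absolute‑value nondecreasing sequence with no finite accumulation point, the same is true of the subsequences supported on $K_1$ and on $K_2$, so there are entire functions $f_1, f_2$ with $Z(f_i) = K_i$ and $\mu_{f_i}(z) = \mu_f(z)$ on $K_i$; moreover $f = f_1 f_2$ up to a unit, by Fact \ref{princ} applied in both directions (each divides $f$, their product has the same zero set and multiplicities as $f$). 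The key point is then that $\lcm(f_1, f_2) = f_1 f_2 = f$ up to a unit, because $Z(f_1) \cap Z(f_2) = K_1 \cap K_2 = \eset$.

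Next I would invoke the hypothesis that $I$ is \emph{weakly prime}, i.e.\ its complement $I^* = E \sm I$ is closed under $\lcm$. If neither $f_1$ nor $f_2$ lay in $I$, then both would be in $I^*$, hence $\lcm(f_1,f_2) = f \in I^*$, contradicting $f \in I$. Therefore $f_1 \in I$ or $f_2 \in I$; say $f_1 \in I$. Then $Z(f_1) = K_1$ is a set of the form $Z(g)$ with $g = f_1 \in I$, so by definition $K_1 \in U_f$. This establishes the partition property, and hence $U_f$ is an ultrafilter.

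I should also double‑check the degenerate cases that the argument silently uses. If one of $K_1, K_2$ is empty — say $K_2 = \eset$ — then $K_1 = D = Z(f) \in U_f$ trivially, so there is nothing to prove; the Weierstrass splitting is only needed when both pieces are infinite (if a piece is finite but nonempty the relevant $f_i$ is just a polynomial, which is fine). One also needs that $U_f$ is genuinely a filter, which is the routine verification already announced in the paragraph preceding the lemma: $\eset \notin U_f$ because $f \neq 0$ (an element of $I$ whose zero set were empty would be a unit, forcing $I = E$, against $I$ proper); upward closure and closure under finite intersection follow from Remark \ref{triv} and the B\'ezout identities (if $Z(g_1), Z(g_2) \in U_f$ with $g_1, g_2 \in I$, then $\gcd(g_1, g_2) \in I$ and $Z(\gcd(g_1,g_2)) = Z(g_1) \cap Z(g_2)$; upward closure uses that if $Z(g) \seq L \seq D$ one can multiply $g$ by a suitable entire function vanishing exactly on $L \sm Z(g)$ inside $D$, which again lies in $I$).

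I do not expect any deep obstacle here: the whole statement is essentially the observation that ``weakly prime'' is exactly the ring‑theoretic shadow of ``ultrafilter'' once zero sets are made disjoint by factoring. The one place demanding a little care is the bookkeeping in the Weierstrass splitting — ensuring the multiplicities are transported correctly so that $f_1 f_2$ and $f$ generate the same principal ideal (not merely have the same zero set) — but this is precisely what Fact \ref{princ} delivers, so it is routine rather than hard.
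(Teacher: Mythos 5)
Your proposal is correct and follows essentially the same argument as the paper: split $f$ along the partition into $f_1, f_2$ with matching multiplicities, note that $\lcm(f_1,f_2)$ generates the same ideal as $f$, and use that $I^*$ is closed under $\lcm$ to force one of the $f_i$ into $I$. Your extra checks (the degenerate cases and the verification that $U_f$ is a filter) are details the paper delegates to the preceding discussion, but they do not change the substance.
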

\begin{proof}
We have already mentioned that $U= U_f$ is a filter. To prove that $U$ is maximal, consider a nontrivial partition
$D= K_1\cup K_2$. Let $f_1\in E$ have $K_1$ as its zero set, and multiplicity of each $z\in K_1$ is the same as
for $f$; and similarly define $f_2$. If $f_1, f_2\notin I$ then, by the assumption, $g= \lcm(f_1, f_2)\notin I$.
But $g$ generates the same ideal as $f$, a contradiction.
\end{proof}

Note that, if $f, g\in I$, then $h= \gcd(f,g)\in I$, and $U_h$ is a common restriction of $U_f$ and $U_g$ on the
zero set $Z(h)$.

For fixed weakly prime ideals $(z-t)^k E$, $k\geq 1$ the smallest zero set is the singleton $\{t\}$. Thus to
distinguish weakly prime ideals we need more invariants.

Recall that a \emph{cut} on a linearly ordered set $L$ is a proper partition $L= L_1\cup L_2$ such that $L_2$
is upward closed, hence $L_1$ is downward closed. Clearly each cut is uniquely determined by $L_2$ and vice versa,
hence we will often identify the cut with its upper part.

The following proposition describes weakly prime ideals using cuts on some chains.

\begin{prop}\label{ci}
Let $I$ be a nonzero weakly prime ideal of $E$.

1) If $I$ is fixed then $I= (z-t)^k E$ for some $t\in \C$ and $k\geq 1$.

2) Suppose that $I$ is free. Choose $0\neq f\in I$ and let $D= Z(f)$. Let $c(I)$ consist of multiplicity
functions $\mu_g$, $g\in I$ restricted to $D$, considered as elements of $\mN_{U_f} = \N^D/U_f$. Then $c(I)$ is a cut
on this chain, further $U_f$ and $c(I)$ determine $I$ uniquely.
\end{prop}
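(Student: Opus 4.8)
The plan is to treat the two cases separately, using Lemma \ref{max-u} to know that $U_f$ is an ultrafilter and Remark \ref{triv} (together with Fact \ref{princ}) to reduce all divisibility questions on $E$ to statements about zero sets and multiplicity functions on $D=Z(f)$.

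For part 1), suppose $I$ is fixed, so there is $g\in I$ whose zero set $Z(g)\in U_f$ is smallest, i.e.\ $Z(g)\seq Z(g')$ for every $g'\in I$. First I would argue that $Z(g)$ must be a singleton $\{t\}$: if it contained two distinct points $t_1,t_2$, partition it and produce (via Weierstrass, Fact after Fact \ref{princ}) functions $g_1,g_2$ agreeing with $g$ in multiplicity on the respective pieces; since $I$ is weakly prime and $\lcm(g_1,g_2)$ generates the same ideal as $g$ (hence lies in $I$), one of $g_1,g_2$ lies in $I$, contradicting minimality of $Z(g)$. So $Z(g)=\{t\}$ and $g=e^h(z-t)^k$ for some $k\geq 1$, whence $(z-t)^k\in I$. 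Next, among all functions in $I$ vanishing only at $t$, pick one of least multiplicity $k$ at $t$ (this $k$ is well defined since $I$ is proper, so $k\geq 1$, and bounded below); then $(z-t)^k E\seq I$. Conversely, by Remark \ref{triv} every element of $I$ is controlled by its gcd with $(z-t)^k$, which is $(z-t)^j$ with $j\leq k$; weak primeness forces $j=k$ (otherwise an $\lcm$ argument as above, or directly the minimality of $k$, is contradicted), so $I\seq (z-t)^k E$. Hence $I=(z-t)^k E$.

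For part 2), fix $0\neq f\in I$, put $D=Z(f)$ and $\mN=\N^D/U_f$. By Remark \ref{triv} and Fact \ref{princ}, for $g\in E$ with $Z(g)\seq D$ we have $g\in I$ iff $\gcd(f,g)\in I$, and membership in $I$ depends on $g$ only through the class of $\mu_g|_D$ in $\mN$ (two such $g$ with equal classes generate, after gcd with $f$, the same principal ideal). That $c(I)$ is downward closed: if $\mu_g|_D\in c(I)$ with $g\in I$ and $\nu\leq \mu_g|_D$ in $\mN$, choose (Weierstrass) an entire $g'$ with $Z(g')=D$ and $\mu_{g'}|_D=\nu$ on a large set; then $g'\mid g$ up to units on the relevant support, so $g'\in I$ by Fact \ref{princ} applied inside $I$, and $\nu\in c(I)$. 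That $c(I)$ is upward \emph{un}closed, i.e.\ a proper cut: its complement is nonempty because $f\notin$ every principal ideal smaller than $I$ forces some class to lie outside — more precisely, since $I$ is free there is no smallest zero set, but one still needs to see $c(I)\neq \mN$; this follows since $1\notin I$ forces the zero class to be excluded once we normalise, and freeness of $I$ gives for each $g\in I$ some $g'\in I$ with $Z(g')\subsetneq Z(g)$, which in $\mN$-terms (after passing to a common refinement) produces strictly smaller classes, so $c(I)$ has no greatest element and its complement, being upward closed, is a genuine cut. Finally, to see $U_f$ and $c(I)$ determine $I$: given $g\in E$, either $Z(g)\notin U_f$, in which case $\gcd(f,g)$ has zero set not in $U_f$ and a short weak-primeness argument shows $g\notin I$; or $Z(g)\in U_f$, and then $g\in I$ iff the class of $\mu_g$ restricted to (a large subset of) $D$ lies in $c(I)$ — both directions by Fact \ref{princ} and the reduction above.

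The main obstacle I expect is not the downward-closedness but checking that $c(I)$ is a \emph{proper} cut, i.e.\ that its complement in $\mN$ is nonempty, and dually that it has no maximum — this is exactly where freeness of $I$ must be converted into a statement about classes in the ultrapower $\mN=\N^D/U_f$, and it requires comparing multiplicity functions of different generators $g_1,g_2\in I$ on the common refinement $Z(\gcd(g_1,g_2))$ and invoking that $U_f$ is an ultrafilter (not merely a filter) so that the comparison of classes is total. The invariance/well-definedness bookkeeping (that the class $\mu_g|_D$ in $\mN$ does not depend on the choice of representative, and is compatible with passing between different $f\in I$ via restriction of ultrafilters, as noted after Lemma \ref{max-u}) is routine but should be stated carefully.
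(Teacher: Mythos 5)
There is a genuine error at the heart of your treatment of part 2): you claim that $c(I)$ is \emph{downward} closed and justify it by the implication ``$g'\mid g$ and $g\in I$, hence $g'\in I$''. That implication is false for ideals — divisors of elements of $I$ need not lie in $I$ (taking $\nu=0$ your argument would put a unit, and hence $1$, into the proper ideal $I$). What is true, and what the paper proves, is the opposite closure: the set $c(I)=\{\mu_g\bmod U_f: g\in I\}$ is \emph{upward} closed, precisely because $I$ is an ideal and so absorbs multiples. Concretely, given $g\in I$ and $\mu\geq \mu_g$ on a large set $K\in U_f$, one picks $h\in I$ with $Z(h)=K$ (possible by the definition of $U_f$), replaces $h$ by $\gcd(g,h)\in I$ so that $\mu_h\leq\mu$ on $K$, builds by Weierstrass an entire $u$ with $\mu_u=\mu$ on $K$ and $0$ elsewhere, and concludes $u\in hE\seq I$. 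Note also that a cut here is just a proper partition with upward‑closed upper part (the paper identifies $c(I)$ with that upper part); your worries about the lower part having ``no maximum'' are not part of what needs proving, while properness follows since $\mu_f\in c(I)$ and the zero class is not in $c(I)$ (otherwise Remark \ref{triv} would force $1\in I$). Since the upward‑closure step is the only place where the ideal structure of $I$ enters, getting its direction and justification right is essential; as written the argument does not establish that $c(I)$ is a cut.

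The rest is in reasonable shape. Your part 1) is a correct (and more detailed) version of the paper's one‑line argument: the minimal zero set must be a singleton $\{t\}$ because $U_f$ is an ultrafilter (your partition argument re‑proves this), and then minimality of the multiplicity $k$ at $t$ together with Remark \ref{triv} gives $I=(z-t)^kE$. Your uniqueness sketch for part 2) is also essentially the paper's: assuming $g\in I\sm I'$ with the same ultrafilter and cut, one finds $g'\in I'$ with $\mu_g\equiv\mu_{g'}$ modulo $U_f$, restricts to a common large set $K$, and produces $h\in I$, $h'\in I'$ with $hE=h'E$, a contradiction. So only the closure step needs to be reversed and re‑justified.
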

\begin{proof}
1) If $I$ is fixed, then choose $0\neq f\in I$ with the least zero set. Since $U_f$ is an ultrafilter, we conclude
that $Z(f)$ is a singleton $\{t\}$. It follows that $I= (z-t)^k E$ for some $k\geq 1$.

2) Suppose that $I$ is free. First we will show that $c(I)$ is upward closed. Suppose that $\mu_g\leq \mu$ modulo
$U$ for some function $\mu\in \N^D$, hence $\mu_g(d)\leq \mu(d)$ for each $d$ in a large subset $K$ of $D$.
We need to construct an entire $u$ such that the restriction of its multiplicity function to $D$ equals $\mu$
modulo $U$.

By the definition of $U_f$
we find $h\in I$ such that $Z(h)= K$. Replacing $h$ by $\gcd(g,h)$ we may assume that $\mu_h(d)\leq \mu(d)$ for
each $d\in K$. Now construct an entire $u$ such that $\mu_u$ restricted to $K$ coincides with $\mu$, and equals
zero otherwise. Then $h$ divides $u$, hence $u\in I$.

It remains to check that $U_f$ and $c(I)$ determine $I$ uniquely. Suppose that $I'\neq I$ is another weakly
prime ideal which contains $f$ and define the same ultrafilter $U_f$ on $D= Z(f)$, and the same cut $c(I)$.
By symmetry we may assume that there exists $g\in I\sm I'$. By the assumption, there exists $g'\in I'$ such
that the restrictions of $\mu_g$ and $\mu_{g'}$ to $D$ equal modulo $U$. Choose a large $K\seq D$ on which these
multiplicity functions coincide. Construct $h\in I$, $h'\in I'$ whose zero sets equal $K$, and
$\mu_h(z)= \mu_{h'}(z)$ for each $z\in K$. It clearly follows that $h\in I\sm I'$, but $hE= h'E$, a contradiction.
\end{proof}

Thus nonzero weakly prime ideals $I, I'$ of $E$ coincide iff for some (or any) $0\neq f\in I\cap I'$ they define
the same ultrafilter $U= U_f$ on the zero set $D= Z(f)$, and the same cut on the corresponding chain $\mN_U$.

The following remark is obvious.

\begin{remark}\label{rem}
Let $I\seq I'$ be nonzero weakly prime ideals of $E$. If \ $0\neq f\in I$ then they define the same ultrafilter
$U=U_f$ on $D= Z(f)$, and $c(I)\leq c(I')$ for corresponding cuts on $\mN_U$, i.e. the upper part of $c(I)$ is
contained in the upper part of $c(I')$.
\end{remark}
\begin{proof}
Clearly $U(I)\seq U(I')$, hence the equality follows from the maximality of ultrafilters. The remaining part is
straightforward.
\end{proof}

Because each prime ideal is completely prime, we recover a well known description of prime ideals of $E$.
Namely, prime ideals $P$ are distinguished by the property that the cut $c(P)$ on the chain $\mN_U$ is
\emph{prime}, i.e., if the equivalence class of a multiplicity function $\mu^k$ is in $c(P)$ for some $k$, then
the same holds true for $\mu$. For instance (taking again $D = \omega$ for simplicity), if $\mu(n)= 2n$ is in $c(I)$, then $\mu'(n)= n$ belongs to there,
but also $\mu''(n)= \lf n/2\rf$.

In particular fixed prime ideals are exactly the maximal ideals $M_t= (z-t)E$, $t\in \C$. If $P$ is not fixed, then,
because all calculations are made modulo a nonprincipal ultrafilter $U$, the property of being prime is quite
tricky. For instance (see \cite{G-H}) for each pair of prime ideals $P\subset P'$ there exist at least
$2^{\aleph_1}$ ideals strictly between $P$ and $P'$. The main idea is that this interval contains a Dedekind
complete $\eta_1$-set of prime ideals, hence \cite[Cor. 13.24]{G-J} gives the desired cardinality.

If we assume the continuum hypothesis, then $\aleph_1= \bf c$, hence the length of a maximal chain of prime
ideals in $E$ equals $2^{\,\bf c}$. However, if we accept the Martin axiom with the negation of $\CH$, then we
see only (following \cite{Jen85}) that this length is at least $2^{\,\aleph_1}= \bf c$. We do not know what is the face value of the Krull dimension of $E$.

Finally we obtain a classical description of maximal ideals $M$ of $E$. Here the corresponding cut $c(I)$
contains all positive multiplicity functions, hence is uniquely determined by the ultrafilter $U$. Thus either
$M$ is fixed, hence equals $M_t= (z-t) E$ for some $t\in \C$; or $M$ is free, therefore is uniquely determined by
the ultrafilter $U_f$ on $D= Z(f)$ for any $0\neq f\in M$. From this it is obvious that each weakly prime ideal
of $E$ is contained in a unique maximal ideal.

\section{Model theory of modules}\label{S-model}

In this section we will recall main notions of the model theory of modules, - for which we refer to \cite{Preb2};
the particular case of B\'ezout domains is treated in detail in \cite{P-T15}.

Let $R$ be a commutative ring. A \emph{positive-primitive formula} $\phi(x)$ in one free variable $x$ is an
existential formula $\ex\, \ov y\, (\ov y A= x \bar b)$, where $\ov y= (y_1, \dots, y_k)$ is a tuple of bound
variables, $A$ is a $k\times l$ matrix over $R$, and $\bar b$ is a row of length $l$. For instance, for each
$a\in R$, we have the \emph{divisibility formula} $a\mid x$ of the form $\ex\, y\, (ya=x)$, and the annihilator
formula $xa= 0$.

Let $N$ be a right $R$-module and choose $m\in N$. We say that $m$ \emph{satisfies} $\phi$ in $N$, written
$N\ms \phi(m)$, if there exists a tuple $\ov m= (m_1, \dots, m_k)$ in $N$ such that $\ov m A= m\ov b$ holds.
For instance $N\ms (a\mid x)(m)$ iff $m= m'a$ for some $m'\in N$, i.e.\ if $m$ is divisible by $a$ in $N$.
Further $N\ms (xa=0)(m)$ iff $ma= 0$.

The corresponding \emph{definable subgroup}, $\phi(N)$, consists of $m\in N$ which satisfy $\phi$. Since $R$ is
commutative, $\phi(N)$ is a submodule of $N$. For instance $(a\mid x)(N)= Na$, and $(xa=0)(N)$ consists of
elements of $N$ which are annihilated by $a$.

We need the following 'elimination of quantifiers' result for pp-formulae over B\'ezout domains.

\begin{fact}\label{elim}(see \cite[L. 2.3]{P-T15})
Let $B$ be a B\'ezout domain. Then each pp-formula $\phi(x)$ is equivalent in the theory of $B$-modules to a finite
sum of formulas $a\mid x\wg xb= 0$, $a, b\in B$; and to a finite conjunction of formulas $c\mid x+ xd=0$,
$c, d\in B$.
\end{fact}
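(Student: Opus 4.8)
The plan is to prove Fact~\ref{elim} by the standard strategy for pp-elimination, specialised to the B\'ezout setting: first reduce an arbitrary pp-formula to a disjunction of \emph{basic} pp-formulae of the shape $a\mid x\wg xb=0$, and then dualise to get the conjunctive normal form in terms of $c\mid x+xd=0$. Throughout, the single essential input is the fact that B\'ezout domains admit Smith normal form for matrices (via the B\'ezout identities stated before the fact), i.e. every matrix $A$ over $B$ can be written $PAQ=\operatorname{diag}(d_1,\dots,d_r,0,\dots,0)$ with $P,Q$ invertible over $B$.

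First I would recall that by general model theory of modules (Prest), every pp-formula is equivalent to one in the canonical form $\ex\,\ov y\,(\ov y\,A=x\,\ov b)$, and that invertible row/column operations on the system $\ov y A=x\ov b$ do not change the definable subgroup: column operations on $A$ just re-coordinatise the existentially quantified tuple $\ov y$, while row operations combine equations. So I may replace $A$ by its Smith normal form. After diagonalising, the system becomes $y_i d_i = x b_i'$ for $i\le r$ together with equations $0 = x b_j'$ for the zero rows; here $\ov b'=\ov b Q$ for the column-transformation $Q$. The equations $0=xb_j'$ contribute a conjunct $x\,b=0$ where $b=\gcd$ of the relevant $b_j'$ (again by B\'ezout, $\bigcap_j \Ann(b_j')=\Ann(\gcd_j b_j')$ holds in any module? — no, that needs care; I would instead simply keep these as a conjunction $\wedge_j\, xb_j'=0$, each of which is itself $1\mid x\wg xb_j'=0$). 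Each remaining equation $y_i d_i = x b_i'$ says $x b_i'\in N d_i$, and over a B\'ezout domain $N d_i = (d_i\mid x)(N)$ — so modulo the zero-row annihilator conditions, the formula becomes a finite conjunction of conditions $d_i\mid x b_i'$. The point is now that each "$d\mid xb$" with $b\neq 0$ is, on the nose, the pp-formula $\ex y\,(yd=xb)$, and one checks it is equivalent to $a\mid x\wg xb_0=0$ after writing $\gcd(d,b)$ out via B\'ezout. Assembling: a pp-formula is equivalent to a finite conjunction of formulae of type $a\mid x$ and type $xb=0$ and type $d\mid xb$; the first sort of normal form (finite sum) comes from the dual computation, and the conjunctive form is essentially what we just produced.

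Concretely, for the \emph{sum} form I would argue as follows: every pp-formula $\phi$ has an elementary dual $D\phi$ (Prest duality), and taking $D$ twice returns $\phi$; since $D$ interchanges sums and conjunctions of pp-formulae and interchanges the divisibility formula $a\mid x$ with the annihilator formula $xa=0$ (this is the classical computation $D(a\mid x)=(xa=0)$ and $D(xb=0)=(b\mid x)$, valid over any commutative ring), the conjunctive normal form $\bigwedge_i (c_i\mid x + x d_i=0)$ dualises to a sum $\sum_i(x c_i=0\ \vee\ \dots)$ — more precisely, one shows $D(a\mid x\wg xb=0)$ and $D(c\mid x+xd=0)$ are again of the two required shapes, and then the two normal forms are formally dual to each other. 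So it suffices to establish one of them and transport it through $D$; I would establish the conjunctive form $\bigwedge(c\mid x+xd=0)$ directly from Smith normal form as sketched, then read off the sum form by duality.

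The main obstacle is bookkeeping rather than conceptual: after Smith-diagonalising $A$ one must track how the transformed right-hand side $\ov b Q$ interacts with the diagonal entries $d_i$ and with the zero rows, and verify that each resulting atomic condition genuinely collapses to one of the two privileged shapes $a\mid x\wg xb=0$ (resp.\ $c\mid x+xd=0$) — this uses repeatedly that in a B\'ezout domain $dB\cap bB = \lcm(d,b)B$ and $dB+bB=\gcd(d,b)B$, together with the module-level identities $Na\cap Nb$ vs.\ $N\,\lcm(a,b)$, which hold for \emph{torsion-free} modules but not in general, so one must be careful to phrase everything at the level of \emph{formulae} (equivalence in the theory $T_B$ of \emph{all} $B$-modules) and not of arbitrary submodules. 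Since this is exactly the content of \cite[L.~2.3]{P-T15}, I would present the above as a sketch and refer there for the full verification.
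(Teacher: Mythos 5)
The paper offers no proof of this Fact; it is quoted verbatim from \cite[L.~2.3]{P-T15}, so your sketch has to be judged against the standard argument there. The foundational step of your proposal is a genuine gap: you take as ``the single essential input'' that every matrix over a B\'ezout domain admits Smith normal form $PAQ=\operatorname{diag}(d_1,\dots,d_r,0,\dots,0)$. That is precisely the assertion that $B$ is an elementary divisor domain, and whether every B\'ezout domain is an elementary divisor domain is a long-standing open problem (Helmer, Kaplansky, Larsen--Lewis--Shores); in general only one-sided (Hermite/triangular) reduction is available, and a triangular $A$ does not decouple the system $\ov y A=x\ov b$ into independent one-variable equations, so your reduction does not get off the ground. (For the ring $E$ of this paper the diagonalization does exist, since $E$ is adequate, hence an elementary divisor domain; but the Fact is stated and used for arbitrary $B$.) The proof in \cite{P-T15} avoids diagonalization entirely: by Warfield's theorem, every finitely presented module over a Pr\"ufer --- hence B\'ezout --- domain is a direct summand of a finite direct sum of cyclically presented modules $B/c_iB$; one takes a free realization $(M,m)$ of $\phi$, decomposes $m$ along such a presentation, observes that the pp-type of each component of $m$ in $B/c_iB$ is generated by a formula $a_i\mid x\wg xb_i=0$, and concludes that $\phi$ is the finite sum of these. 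The conjunctive normal form then follows by elementary duality exactly as in the last part of your sketch, which is correct: $D(a\mid x\wg xb=0)$ is $b\mid x+xa=0$ and $D$ interchanges sums with conjunctions.

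A secondary, repairable error: even granting diagonalization, a single condition $d\mid xb$ (i.e.\ $\ex y\,(yd=xb)$) is in general \emph{not} equivalent to one formula $a\mid x\wg xb_0=0$, but only to a finite \emph{sum} of such. For instance over $\Z$ the formula $4\mid 2x$ is equivalent to $(2\mid x)+(x2=0)$ and to no single conjunction of the privileged shape, as one sees by evaluating on $\Z$ and on $\Z/2\Z$. Your conjunction of such conditions would therefore be a conjunction of sums, and converting it into either normal form requires the distributivity of the pp-lattice (which does hold over B\'ezout domains, being Pr\"ufer) together with the identities $(a\mid x)\wg(a'\mid x)\equiv\lcm(a,a')\mid x$ and $(xb=0)\wg(xb'=0)\equiv x\gcd(b,b')=0$; none of this is spelled out, and it is exactly the bookkeeping you defer to the reference. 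In short: the duality half of your plan is sound, but the existence half rests on an unavailable matrix reduction and should be replaced by the Warfield/free-realization argument.
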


If $B= E$ then a further reduction is possible. For instance, if $0\neq c$ is not a unit and $d$ is nonzero, then
one may assume that $Z(d)\seq Z(c)$ in the latter formula. Namely decompose $d= ed'$ such that $\gcd(e,c)= 1$
according to the definition of being adequate, in particular $Z(d')\seq Z(c)$. Since $c$ and $e$ are coprime, it
follows from \cite[Sect. 3]{P-T15} that the formulae $c\mid x+ xd=0$ and $c\mid x+ xd'=0$ are equivalent. Further,
using elementary duality, we may also assume that, if $0\neq b$ is not a unit and $a\neq 0$, then $Z(a)\seq Z(b)$
in the former formula.

An inclusion of modules $N\seq N'$ is said to be \emph{pure} if, for each $m\in N$ and each pp-formula $\phi$,
from $N'\ms \phi(m)$ it follows that $N\ms \phi(m)$. For instance each injective module is pure in any its
overmodule. We say that a module is \emph{pure injective} if it is injective with respect to pure embeddings.
For example, each injective module is pure injective, and the same holds true for each $R$-module of finite length.

The isomorphism types of indecomposable pure injective modules form points of a topological space, the
\emph{Ziegler spectrum} of $R$, $\Zg_R$. In fact there at most $2^{\,\max(|R|, \aleph_0)}$ such points. The
topology on this space is given by (quasi-compact) basic open sets $(\phi/\psi)$, where $\phi, \psi$ range over
pp-formulae in one variable. Here $(\phi/\psi)$ consists of points $N$ in $\Zg_R$ such that $\phi(N)$ is not a
subset of $\psi(N)$. We will often refer as '$\phi$ over $\psi$' to this set. For instance, the open set $xa=0$
over $x=0$ consists of indecomposable pure injective modules containing a nonzero element annihilated by $a$.

For B\'ezout domains Fact \ref{elim} provides a better basis for Ziegler topology.

\begin{fact}\label{z-basis}
Let $B$ be a B\'ezout domain. Then the basic open sets $a\mid x\wg xb=0$ over $c\mid x+ xd=0$, $a, b, c, d\in B$
form an open basis of Ziegler topology.
\end{fact}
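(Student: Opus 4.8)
The plan is to deduce Fact \ref{z-basis} from the two normal forms in Fact \ref{elim} together with the combinatorics of the basic open sets $(\phi/\psi)$. Recall that a family of pairs $(\phi_i/\psi_i)$ is an open basis as soon as every basic open set $(\phi/\psi)$ is a union of sets from the family; and since each point $N\in(\phi/\psi)$ lies in $(\phi/\psi)$ exactly when $\phi(N)\not\seq\psi(N)$, it suffices to show that whenever $N\in(\phi/\psi)$ we can squeeze a pair of the desired shape between: a formula $\phi'$ of the form $a\mid x\wg xb=0$ with $\phi'(N)\seq\phi(N)$ and $\phi'(N)\not\seq\psi(N)$, and a formula $\psi'$ of the form $c\mid x+xd=0$ with $\psi(N)\seq\psi'(N)$ and $\phi'(N)\not\seq\psi'(N)$; then $N\in(\phi'/\psi')\seq(\phi/\psi)$.

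First I would handle $\phi$. By Fact \ref{elim}, $\phi$ is equivalent modulo $T_B$ to a finite sum $\phi_1+\dots+\phi_n$ with each $\phi_j$ of the form $a_j\mid x\wg xb_j=0$. Hence $\phi(N)=\phi_1(N)+\dots+\phi_n(N)$, and if $\phi(N)\not\seq\psi(N)$ then, since $\psi(N)$ is a subgroup, some summand $\phi_j(N)\not\seq\psi(N)$; take $\phi'=\phi_j$. This already gives $\phi'(N)\seq\phi(N)$ and $\phi'(N)\not\seq\psi(N)$, and $\phi'$ has the required form.

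Next I would handle $\psi$. Again by Fact \ref{elim}, $\psi$ is equivalent to a finite conjunction $\psi_1\wg\dots\wg\psi_m$ with each $\psi_i$ of the form $c_i\mid x+xd_i=0$, so $\psi(N)=\psi_1(N)\cap\dots\cap\psi_m(N)$. From $\phi'(N)\not\seq\psi(N)$ we pick an element witnessing the failure; it lies outside some $\psi_i(N)$, so $\phi'(N)\not\seq\psi_i(N)$. Set $\psi'=\psi_i$; then $\psi(N)\seq\psi'(N)$ and $\phi'(N)\not\seq\psi'(N)$, i.e. $N\in(\phi'/\psi')$. Combining, $(\phi'/\psi')\seq(\phi/\psi)$ because $\phi'(N')\seq\phi(N')$ and $\psi(N')\seq\psi'(N')$ hold in every module $N'$ (the normal forms are equivalences in $T_B$, hence the containments of definable subgroups are uniform), so any point with $\phi'(N')\not\seq\psi'(N')$ also has $\phi(N')\not\seq\psi'(N')\sep\psi(N')$... more carefully, $\phi(N')\sep\phi'(N')\not\seq\psi'(N')\sep\psi(N')$ forces $\phi(N')\not\seq\psi(N')$. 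Finally one should remark that these sets are quasi-compact and that $B=E$ needs no special treatment here, the statement being purely about B\'ezout domains.

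The only mildly delicate point is the passage from "$\phi(N)\not\seq\psi(N)$ for the given $N$" to "$(\phi'/\psi')\seq(\phi/\psi)$ as sets of points": this requires that the decompositions of Fact \ref{elim} be genuine equivalences of pp-formulae in the theory $T_B$ (so that $\phi_j\to\phi$ and $\psi\to\psi_i$ are implications valid in all $B$-modules), which is exactly what that Fact asserts. Given this, the inclusion of basic open sets is automatic and the argument is complete; I expect no real obstacle beyond bookkeeping, since this is the standard "refine an arbitrary basic open set by a better-shaped one" argument specialised to the normal forms available over B\'ezout domains.
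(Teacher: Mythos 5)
Your argument is correct and is exactly the standard refinement argument the paper has in mind: the paper states Fact \ref{z-basis} without proof as an immediate consequence of Fact \ref{elim}, and your derivation (split $\phi$ as a sum, use that $\psi(N)$ is a subgroup to find a summand $\phi_j$ with $\phi_j(N)\not\seq\psi(N)$, then split $\psi$ as a conjunction and find a conjunct $\psi_i$ missed by a witness, finally check $(\phi_j/\psi_i)\seq(\phi/\psi)$ from the uniform implications $\phi_j\to\phi$ and $\psi\to\psi_i$) is precisely the intended bookkeeping. The aside about quasi-compactness is not needed for the basis claim, but nothing in the proof is missing or wrong.
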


Of course some such pairs of pp-formulae define empty sets, hence redundant. A precise criterion when this happens
can be extracted from \cite[Sect. 4]{P-T15}.

Since $E$ is a Pr\"ufer domain, each indecomposable pure injective module $N$ is \emph{pp-uniserial}, i.e.\ the
lattice of definable subgroups of $N$ is a chain. It follows that each basic open set as above equals to the
intersection of the following open sets: 1) $a\mid x$ over $xb=0$; 2) $a\mid x$ over $c\mid x$; 3) $xb=0$ over
$c\mid x$, and 4) $xb=0$ over $xd= 0$, hence these sets give a \emph{subbasis} for the Ziegler topology.

The support of some such pairs is easily understood. For instance, look at the pair $a\mid x$ over $c\mid x$.
If it is nontrivial then, taking the conjunction and using \cite[L. 3.1]{P-T15}, we may assume that $a\neq 0$ and
$c= ga$ for some nonunit $g$. This pair opens a point $N$ iff $Nga$ is a proper subset of $Na$. Since $N$ is
pp-uniserial, this is the same as $Ng\subset N$ and $Na\neq 0$. Thus we can further decompose this basic open
set into the intersection of open sets $x=x$ over $g\mid x$, and $x=x$ over $xa=0$.

However, we see no real advantage in working with this subbasis, because (we thank Lorna Gregory for this remark)
the intersection of arbitrary such pairs, say $xb=0$ over $x=0$, and $x=x$ over $a\mid x$ may be non-compact,
hence equals to an infinite union of basic open sets.

Let $N$ be an $R$-module and let $m$ be a nonzero element of $N$. The \emph{positive primitive type} of $m$ in $N$,
written $pp_M(m)$, consists of pp-formulae $\phi$ such that $m$ satisfies $\phi$ in $N$, in particular this set
is closed with respect to finite conjunctions and implications. The converse is also true: if $p$ is a collection
of pp-formulae closed with respect to finite conjunctions and implications, then there exists a module $N$ and
its element $m$ such that $p= pp_N(m)$.

A pp-type $p$ is said to be \emph{indecomposable} if it is realized by a nonzero element in an indecomposable
pure injective module. This module is unique up to an isomorphism over the realization, and is called the
\emph{pure injective envelope of $p$}, written $\PE(p)$. Note that different pp-types may lead to isomorphic
pure injective envelopes, for example, this is the case when $N$ is an indecomposable pure injective module and
$p= pp_M(m)$, $q= pp_M(mr)$, where $mr$ is nonzero, thus $q$ is a \emph{direct shift} of $p$.

Now we specialize to B\'ezout domains. First we refine the classification of indecomposable pp-types from
\cite[Thm. 4.5]{P-T15}.

\begin{lemma}\label{t-class}
Let $B$ be a B\'ezout domain. Then there exists a natural one-to-one correspondence between indecomposable
pp-types $p$ in one variable in the theory of $B$-modules and the pairs $(I,J)$ such that the following holds.

1) The \emph{annihilator ideal} $I= I(p)$, consisting of $b\in B$ such that $xb=0\in p$, is a weakly prime ideal.

2) The \emph{non-divisibility} ideal $J= J(p)$, consisting of $a\in B$ such that $a\mid x$ is not in $p$, is
a weakly prime ideal.

3) $I^{\sh}$ and $J^{\sh}$ are comparable prime ideals.
\end{lemma}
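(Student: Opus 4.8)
The plan is to build the correspondence in both directions and then check it is a bijection refining the known classification from \cite[Thm.~4.5]{P-T15}. In that earlier result, indecomposable pp-types $p$ are already parametrised (over a B\'ezout domain) by a pair of invariants, the annihilator and the non-divisibility data; what is new here is the cleaner packaging: the two invariants are genuine weakly prime \emph{ideals} and their sharp closures are comparable primes. So I would start from an indecomposable pp-type $p$, put $I=I(p)=\{b\in B\mid xb=0\in p\}$ and $J=J(p)=\{a\in B\mid (a\mid x)\notin p\}$, and verify (1) and (2). For (1): $I$ is an ideal because $xb=0$ and $xb'=0$ in $p$ force $x(b+b')=0\in p$ (closure of $p$ under conjunction/implication), and $xb=0\in p$ forces $x(rb)=0\in p$ for all $r\in B$. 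To see $I$ is weakly prime I would use the pure-injective envelope $N=\PE(p)$ with realizing element $m$: since $B$ is Pr\"ufer, $N$ is pp-uniserial, so the lattice of definable subgroups through $m$ is a chain; if $b_1,b_2\notin I$, i.e. $mb_1\neq0\neq mb_2$, I want $m\cdot\lcm(b_1,b_2)\neq0$. Writing $\lcm(b_1,b_2)=b_1v_1=b_2v_2$ and using that in a pp-uniserial module the submodules $\Ann_N(b_1)$ and $\Ann_N(b_2)$ are comparable, one of them is contained in the other, say $\Ann_N(b_1)\subseteq\Ann_N(b_2)$; then $m\lcm(b_1,b_2)=0$ would give $mb_1v_1=0$, i.e. $mv_1\in\Ann_N(b_1)\subseteq\Ann_N(b_2)$, so $mv_1b_2=0$; but $\lcm(b_1,b_2)/b_2=v_2$ is a unit times $v_1\cdot(\text{stuff})$... — the honest route is to quote the elimination of quantifiers (Fact~\ref{elim}), which writes $\phi$ as a sum of $a\mid x\wg xb=0$, together with the pp-uniserial structure, exactly as in the proof of \cite[Thm.~4.5]{P-T15}; I would cite that and extract the weak-primeness as the statement that the complement of $I$ is closed under $\lcm$. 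The argument for $J$ is the elementary dual of the one for $I$ (divisibility versus annihilation), using that $(a\mid x)\in p$ and $(a'\mid x)\in p$ do \emph{not} in general give $(\gcd\mid x)$ — rather one checks the complement $B\sm J$, the set of $a$ with $(a\mid x)\in p$, is closed under... again $\lcm$, via the dual definable-subgroup chain $\{Na\}$.

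Next, for (3), I would invoke Lemma~\ref{sharp}: $I^{\sh}$ and $J^{\sh}$ are prime ideals. Comparability is where the indecomposability of $p$ really bites. The point is that $p$ being indecomposable means it is not a proper "sum" of simpler types in the lattice sense, and in the B\'ezout setting this translates (see \cite[Sect.~4]{P-T15}) into a compatibility condition between the annihilator and non-divisibility invariants. Concretely, suppose $I^{\sh}$ and $J^{\sh}$ were incomparable; pick $a\in I^{\sh}\sm J^{\sh}$ and $b\in J^{\sh}\sm I^{\sh}$. Then $ar\in I$ for some $r\notin I$ and $bs\in\!\!\!/\,$... — I would run the forking/anti-forking dichotomy exactly as in Lemma~\ref{weak}: using that $B$ is B\'ezout I can replace $a,b$ by coprime representatives modulo the sharp closures and then manufacture, from the realizing element $m$ in $N=\PE(p)$, a nontrivial decomposition of the definable-subgroup interval it generates, contradicting indecomposability of $N$ (equivalently of $p$). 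This is really the same geometry as the picture after Fact~\ref{adeq}: incomparable primes below would force a "fork" that a pp-uniserial indecomposable cannot support.

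For the reverse direction, given a pair $(I,J)$ of weakly prime ideals with $I^{\sh}$ and $J^{\sh}$ comparable, I would construct the pp-type generated by $\{xb=0\mid b\in I\}\cup\{\ng(a\mid x)\mid a\in J\}$ — more precisely its closure under conjunction and implication in the lattice of pp-formulae over $B$, which by Fact~\ref{elim} is controlled by formulas $a\mid x\wg xb=0$. One must show this set is \emph{consistent} (non-trivial), i.e. that it does not collapse to $x=0$, and that it is \emph{indecomposable} as a pp-type; both are where the comparability hypothesis (3) is used, ruling out exactly the combinations that would either be inconsistent or split. Here I would lean on \cite[Thm.~4.5]{P-T15}, which already produces an indecomposable pp-type from such data; the task reduces to matching invariants, namely $I(p)=I$ and $J(p)=J$ for the type $p$ so produced, and checking nothing finer than $(I,J)$ is needed to pin $p$ down up to the correspondence. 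Finally I would verify the two assignments $p\mapsto(I(p),J(p))$ and $(I,J)\mapsto p$ are mutually inverse, which is a formal bookkeeping once the invariants are shown to determine $p$.

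The main obstacle I expect is step (3): showing that indecomposability of the pp-type forces $I^{\sh}$ and $J^{\sh}$ to be comparable, and conversely that comparability suffices for indecomposability. The "easy" half — extracting weakly prime ideals in (1) and (2) — is essentially a repackaging of Fact~\ref{elim} plus pp-uniseriality. But the comparability condition is the genuinely new content, and getting it to drop out cleanly will require carefully importing the forking analysis of \cite[Sect.~4]{P-T15} (and mimicking the proof of Lemma~\ref{weak}) rather than a short self-contained argument.
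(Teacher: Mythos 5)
Your overall strategy is the right one and matches the paper's: this lemma is essentially a repackaging of \cite[Thm.~4.5]{P-T15}, and the paper's proof consists of nothing more than citing that result and translating its condition 3). But the one piece of genuinely new content --- the translation of condition 3) --- is exactly where your proposal goes astray. In \cite{P-T15} the third condition reads: if $d\in I^*$ divides $b\in I$ and $c\in J^*$ divides $a\in J$, then $b/d$ and $a/c$ are not coprime. Since $I^{\sh}$ consists precisely of such quotients $b/d$ (and likewise for $J^{\sh}$), this says that no element of $I^{\sh}$ is coprime to any element of $J^{\sh}$, which in a B\'ezout domain is equivalent to $I^{\sh}+J^{\sh}$ being a proper ideal, i.e.\ to both primes lying under a common maximal ideal; and in a Pr\"ufer (in particular B\'ezout) domain two primes under a common maximal ideal are comparable. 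That three-line translation is the entire proof.

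Your proposed route for 3) --- ``run the forking/anti-forking dichotomy exactly as in Lemma~\ref{weak}'' and ``replace $a,b$ by coprime representatives modulo the sharp closures'' --- is a genuine gap. Lemma~\ref{weak} is a statement about \emph{adequate} B\'ezout domains and its proof uses the adequate factorization $p=r_1s_1$ with $\gcd(r_1,q_1)=1$; the present lemma is for arbitrary B\'ezout domains, where no such factorization is available. What actually makes the contrapositive work is not adequacy but the Pr\"ufer comparability fact: if $I^{\sh}$ and $J^{\sh}$ were incomparable they could not lie under a common maximal ideal, so $I^{\sh}+J^{\sh}=B$, which yields coprime elements $r\in I^{\sh}$, $s\in J^{\sh}$ and hence a violation of the \cite{P-T15} admissibility condition. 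Your treatment of 1) and 2) (falling back on Fact~\ref{elim} and pp-uniseriality, i.e.\ on the proof of \cite[Thm.~4.5]{P-T15}) and of the converse direction (again citing that theorem) is acceptable and is what the paper implicitly does, though your direct lcm computation for weak primeness of $I$ trails off unfinished and should simply be replaced by the citation.
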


Such pairs are called \emph{admissible} in \cite{P-T15}.

\begin{proof}
The only difference with \cite{P-T15} is that in there 3) is formulated as follows. If $d\in I^*$ divides
$b\in I$, and $c\in J^*$ divides $a\in J$, then the quotients $b/d$ and $a/c$ are not coprime. This means that
$b/d\in I^{\sh}$ and $a/c\in J^{\sh}$, hence that $I^{\sh}+ J^{\sh}$ is a proper ideal. The remaining part is
straightforward.
\end{proof}

Note that, if $I=0$ and $J=0$, then 3) gets trivial. On the other hand, if $B= E$ and $I, J$ are nonzero, then 3)
means that there is $0\neq f\in I\cap J$, and the ultrafilters $U_f$ on $D= Z(f)$ defined by $I$ and $J$
coincide, and there are no further restrictions. Thus we are led to the following definition.

A triple $(U, I, J)$ is said to be \emph{admissible}, if $I, J$ are weakly prime ideals of $E$ such that one
of the following holds.

1) $I= J= 0$ and $U$ is an empty.

2) $I$ is nonzero, $J= 0$ and, for some $0\neq f\in I$, $U= U_f$ is an ultrafilter on $D= Z(f)$ corresponding
to $I$.

3) $I=0$, $J$ is nonzero and, for some $0\neq g\in J$,  $U= U_g$ is an ultrafilter on $D= Z(g)$ corresponding
to $J$.

4) $I, J\neq 0$ and there is $0\neq h\in I\cap J$ such that $U= U_h$ is an ultrafilter on $D= Z(h)$ defined
by both $I$ and $J$.

When $I$ or $J$ are nonzero, they define the cuts $c(I)$ and $c(J)$ on the ultraproduct $\mN_U= \N^D/U$,
and are uniquely determined by these cuts. We will often identify ideals with the corresponding cuts.

Note that the triples $(U, I, J)$ and $(U',I',J')$ in 4) produce the same pp-type iff $I= I'$, $J= J'$, hence $U$
and $U'$ have a common restriction to $Z(f)\cap Z(g)$, and similarly for 2) and 3). For instance, if $U$ is
defined on some $D$ and generated by $t\in D$, then $I= (z-t)^k E$ and $J= (z-t)^l E$, and these $k, l\geq 1$
uniquely determine the pp-type.

In particular there is a unique pp-type corresponding to the pair $I= J= 0$ as in 1). This pp-type is realized by
any nonzero element in the quotient field $Q$ of $E$, which is the field of meromorphic functions.

We will denote by $p(U,I,J)$ the indecomposable pp-type associated to an admissible triple $(U,I,J)$, and by
$\PE(U,I,J)$ the corresponding indecomposable pure injective module.

It follows from \cite[Thm. 5.4]{Zie} that over a commutative ring $R$ each indecomposable pure injective module
$N$ localizes. Namely define the \emph{localizing ideal} $P= P(N)$ to consist of elements of $R$ which do not act by
multiplication as automorphisms of $N$. Then $P$ is a prime ideal and $N$ is (pure injective indecomposable)
module over the localization $R_P$. This ideal is easily recognized in our setting.

\begin{lemma}\label{types}
Let $(U,I,J)$ be an admissible triple over $E$, and let $N= \PE(U,I,J)$ be the corresponding indecomposable
pure injective module. Then the localizing ideal of $N$ is the prime ideal $I^{\sh}\cup J^{\sh}$.
\end{lemma}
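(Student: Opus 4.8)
The plan is to identify the localizing ideal $P(N)$ directly from its defining property: $r \in P(N)$ iff multiplication by $r$ fails to be an automorphism of $N$. Since $N$ is indecomposable pure injective over the Pr\"ufer domain $E$, it is pp-uniserial, and multiplication by $r$ on $N$ is an automorphism iff it is both injective and surjective on $N$. I would first show $P(N) = I^{\sh} \cup J^{\sh}$ as a set equality, and then note that $I^{\sh}$ and $J^{\sh}$ are comparable prime ideals (by Lemma \ref{t-class}(3) together with Lemma \ref{sharp}), so their union is the larger of the two, hence a prime ideal, which is exactly what Ziegler's localization theorem \cite[Thm. 5.4]{Zie} predicts for $P(N)$.

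For the containment $I^{\sh} \cup J^{\sh} \seq P(N)$: suppose $r \in I^{\sh}$, so $ar \in I$ for some $a \in I^* = E \sm I$. Pick a nonzero $m \in N$ realizing the type $p = p(U,I,J)$; since $a \notin I = I(p)$ we have $ma \neq 0$, but $(ma)r = m(ar) = 0$ because $ar \in I(p)$. Thus multiplication by $r$ kills the nonzero element $ma$, so it is not injective on $N$, hence not an automorphism, so $r \in P(N)$. Dually, if $r \in J^{\sh}$, then $cr \in J$ for some $c \in J^* = E \sm J$, i.e.\ $c \mid x$ is in $p$ while $cr \mid x$ is not. So $m$ itself is divisible by $c$ in $N$ but not by $cr$; I would argue that multiplication by $r$ is then not surjective on $N$ — more precisely, if it were surjective then $Nr = N$, and combined with pp-uniseriality and the fact that $m \in Nc$ one gets $m \in Ncr$, contradicting $cr \mid x \notin p$. (This is the standard direct-shift/divisibility argument of \cite{P-T15}; I'd cite the relevant lemma there rather than redo it.) Hence $r \in P(N)$ again.

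For the reverse containment $P(N) \seq I^{\sh} \cup J^{\sh}$: take $r \notin I^{\sh} \cup J^{\sh}$ and show multiplication by $r$ is an automorphism of $N$. Injectivity: if $m'r = 0$ for some $m' \in N$, then since $N$ is pure injective indecomposable its elements have pp-types among those classified in Lemma \ref{t-class}, and $r$ would lie in the relevant sharp ideal; more concretely, $r \in \Ann$ of some nonzero element forces $r$ into $I^{\sh}$ of that element's type, which (all types in $N$ being direct shifts of $p$) is contained in $I^{\sh}$ — contradiction. Surjectivity: one uses that $N$ is pure injective, so it suffices to check that the pp-type "$\ex y\,(yr = x)$" together with $pp_N(m)$ is finitely satisfied for every $m \in N$; the obstruction to solving $yr = m$ is precisely captured by $r \in J^{\sh}$ of the type of $m$, so $r \notin J^{\sh}$ gives solvability. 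I would assemble both directions by citing the analysis of automorphisms of indecomposable pure injectives over B\'ezout domains in \cite{P-T15}, phrasing it via the annihilator and non-divisibility ideals $I(q), J(q)$ of the various shifted types $q$ occurring in $N$.

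The main obstacle I anticipate is the surjectivity half of the argument: injectivity is a clean one-element computation, but showing $Nr = N$ when $r \notin J^{\sh}$ requires using pure injectivity of $N$ in an essential way (a pp-type must be realized, not merely finitely satisfiable) and careful bookkeeping of how the ideal $J^{\sh}$ of a direct shift of $p$ relates to $J^{\sh}$ itself. I expect the shift invariance — that $I^{\sh}$ and $J^{\sh}$ are unchanged under passing from $p$ to a direct shift $q = pp_N(ms)$, which holds because $\sh$ only depends on the ideal up to the localizing data — is the technical fact doing the real work, and I would isolate it as a preliminary observation before running the two containments.
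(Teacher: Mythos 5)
Your proposal is correct and follows essentially the same route as the paper: fix a realization $m$ of $p(U,I,J)$, observe that $f\in I^{\sh}$ (via $g\in I^*$ with $gf\in I$, so $mg\neq 0$ but $mgf=0$) obstructs injectivity and dually $f\in J^{\sh}$ obstructs surjectivity, and conversely that $f\notin I^{\sh}\cup J^{\sh}$ leaves the pp-type (and its sharp ideals) unchanged and hence yields an automorphism. The paper's own proof is in fact terser than your write-up -- it dispatches the $J^{\sh}$ case with ``similarly'' and the converse with a single sentence -- so your extra care with surjectivity and shift-invariance only fills in details the paper leaves implicit.
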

\begin{proof}
Choose $m\in N$ which realizes $p= p(U,I,J)$. If $f\in E$ then it is easily checked that the multiplication by $f$
does not increase $p$ iff $f\notin I^{\sh}\cup J^{\sh}$, from which the result follows.

Namely, for every $f\in I^{\sh}$ there is $g\in I^*$ such that $gf\in I$. It follows that $mg\neq 0$ and
$mgf= 0$, hence $f$ cannot determine an automorphism of $N$; and similarly if $f\in J^{\sh}$. Conversely, let
$f\notin I^{\sh}\cup J^{\sh}$. Then $I^{\sh}$ and $J^{\sh}$ are preserved  under multiplication by $f$.
Thus this multiplication does not increase $p$ and determines an automorphism of $N$.
\end{proof}

Another possibility to grasp the meaning of this ideal is the following. We have $f\notin I^{\sh}\cup J^{\sh}$
iff $Z(f)$ is separated from $U$, i.e.\ if there exists $K\in U$ such that $Z(f)\cap K= \eset$.

Having described indecomposable pp-types, we wish to classify their envelopes, i.e.\ indecomposable pure injective modules. To determine points of $\Zg_E$, it remains to describe the equivalence relation on such
pp-types which correspond to the isomorphism relation on their envelopes. We have already mentioned the
typical occurrence of such identification: the shift by an element of the ring.

It follows from \cite[L. 4.7]{P-T15} that for B\'ezout domains this is the only possibility: if $m, m'$ are
nonzero elements in an indecomposable pure injective module $N$, then there exists $r\in B$ such that
$pp_N(mr)= pp_N(m')$ or $pp_N(m)= pp_N(m'r)$, hence these types are identified by either direct or inverse
shift. This leads to a simple description of this equivalence relation on the level of admissible pairs.
We say that admissible pairs $(I,J)$ and $(I',J')$ are \emph{equivalent}, if their pure injective envelopes
are isomorphic. By \cite[L. 4.6, 4.7]{P-T15} this happens iff one of the following holds.

1) There exists $a\notin I$ such that $I'= (I:a)= \{b\in B\mid ab\in I\}$ and
$(J')^*= J_a^*= \{b\in B\mid b/\gcd(a,b)\in J^*\}$, the direct shift by $a$,

2) The symmetric condition with $(I,J)$ and $(I',J')$ interchanged, the inverse shift by $a$.

Note that the direct or inverse shift of the zero ideal is zero again, furthermore such shifts do not change
prime ideals.

For $E$ the above shifts correspond to a simultaneous shifting of the pair of cuts. Namely, we choose a function
$\mu$ in the lower part of $c(I)$, subtract it from the multiplicity function of each $f\in I$ to get $I'$, and
add this function to the multiplicity function of each $g\in J$ to get $J'$; or make a similar construction
starting with $\mu$ in the lower part of $c(J)$.

For instance, suppose that $I$ is a principal cut generated by the function $\mu(n)= n+1$ in the ultraproduct
$\N^D/U$ for some zero set $D$ identified with $\om$; and let $J$ correspond to the the principal cut on $D$
generated by $\eta(n)= n^2$. Then the function $\rho(n)= n$ is in the lower part of $c(I)$. Taking the direct
shift by $\rho$, we obtain an equivalent pair $(I',J')$, where $I'$ is the maximal ideal $M_U$, such that
$c(I')$ is generated by $\mu'(n)=1$; and $c(J')$ is generated by $\eta'(n)= n^2+n$.

Thus we have obtained the following description of points of $\Zg_E$.

\begin{theorem}\label{points}
Let $E$ be the ring of entire functions. There is a natural one-to-one correspondence between points of the
Ziegler spectrum of $E$ (hence isomorphism type of indecomposable pure injective modules) and admissible triples
$(U,I,J)$ with respect to the following equivalence relation.

1) For nonzero $I,J, I',J'$, the triples $(U,I,J)$ and $(U',I',J')$ are equivalent iff $U$ and $U'$ can be
restricted on a common zero set $D\in U, U'$ such that the restriction of cuts corresponding to $(I,J)$
and $(I',J')$ can be identified by a shift.

2) If $I=0$ but $J$ is nonzero, then $(U,I,J)$ and $(U',I',J')$ are equivalent iff $I'=0$ and $J, J'$ can
be restricted to a common zero set $D\in U, U'$ such that the restriction of cuts corresponding to $J$ and
$J'$ on $D$ can be identified by a shift.

3) If $I$ is nonzero but $J=0$, then $(U,I,J)$ and $(U',I',J')$ are equivalent iff $J'=0$ and $I, I'$ can
be restricted to a common zero set $D\in U, U'$ such that the restriction of cuts corresponding to $I$ and
$I'$ on $D$ can be identified by a shift.

4) If $I= J= 0$, then we have only one admissible triple $(\eset,0,0)$ in this equivalence class.
\end{theorem}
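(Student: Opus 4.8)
The plan is to assemble Theorem \ref{points} from the ingredients already in place rather than to prove anything genuinely new: Lemma \ref{t-class} gives the bijection between indecomposable pp-types and admissible pairs $(I,J)$, Proposition \ref{ci} together with Lemma \ref{max-u} repackages a nonzero weakly prime ideal of $E$ as an ultrafilter $U$ together with a cut on $\mN_U$, and the shift description recalled from \cite[L. 4.6, 4.7]{P-T15} tells us exactly when two admissible pairs have isomorphic pure injective envelopes. So the theorem is really a translation of the abstract B\'ezout-domain picture into the concrete language of triples $(U,I,J)$, and the proof is a matter of checking that this translation is faithful in both directions.

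\medskip

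First I would fix notation: to an admissible pair $(I,J)$ over $E$ associate a triple $(U,I,J)$ by choosing $0\neq f$ in whichever of $I$, $J$ is nonzero (in $I\cap J$ if both are, which is possible by clause 3 of Lemma \ref{t-class} unwound as in the discussion preceding the theorem) and setting $U=U_f$; by Lemma \ref{max-u} this is an ultrafilter on $D=Z(f)$, and by Lemma \ref{sharp}/\ref{weak} the hypothesis $I^{\sh}$, $J^{\sh}$ comparable is automatic once $U$ is common, so the resulting triple is admissible in the sense defined before the theorem. Conversely an admissible triple determines $I$ and $J$ (hence the pp-type $p(U,I,J)$) via the cuts $c(I)$, $c(J)$ on $\mN_U$, by Proposition \ref{ci} part 2 in the free case and by $I=(z-t)^kE$, $J=(z-t)^lE$ in the fixed case. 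One must observe that different choices of $f$ give the same data: restricting from $Z(f)$ to $Z(f')\subseteq Z(f)$ (for $f'=\gcd$ of representatives) carries $U_f$ to $U_{f'}$ by maximality of ultrafilters, and carries the cut to its restriction, as recorded in Remark \ref{rem} and the remark after Lemma \ref{max-u}. This is the content of "natural one-to-one correspondence" and handles cases 2), 3), 4) of the equivalence relation almost for free, since there the zero ideal is forced to be matched with the zero ideal (shifts preserve prime, in particular zero, ideals), and case 4) collapses to the single triple $(\eset,0,0)$ because the pp-type with $I=J=0$ is unique.

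\medskip

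The substance is case 1). Here I would show that the equivalence "$(I,J)\sim(I',J')$ via a direct or inverse shift by $a$" of \cite[L. 4.6, 4.7]{P-T15} translates exactly into "$U$ and $U'$ restrict to a common zero set on which the cuts of $(I,J)$ and $(I',J')$ differ by a shift." In one direction: a shift by $a\notin I$ replaces each multiplicity function $\mu_g$ ($g\in I$) by $\mu_g-\mu_a$ restricted to $D$ and each $\mu_g$ ($g\in J$) by $\mu_g+\mu_a$, exactly as spelled out in the paragraph before the theorem; since $a\notin I$ means $\mu_a$ lies in the lower part of $c(I)$, this is a legitimate simultaneous translation of the pair of cuts on $\mN_U$, and (after passing to $D\cap Z(a)^c$ or rather to a common large set) it realizes the cut identification demanded in 1). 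In the other direction, given a common zero set $D$ and a function $\mu$ realizing the translation, lift $\mu$ to the multiplicity function of an entire $a$ supported on $D$; this $a$ is not in $I$ precisely because $\mu$ is below $c(I)$, and then $(I:a)$ and $J_a$ (in the notation of \cite{P-T15}) are read off as the shifted cuts, so the abstract shift relation holds. The one place demanding care — and the step I expect to be the main obstacle — is the bookkeeping about the choice of representative zero set: one has to check that "can be restricted to a common $D$" is an equivalence relation on triples and that the shift identification is independent of which common $D$ one picks (two common zero sets have a common refinement still in both ultrafilters, and restriction commutes with the cut translation). Once that coherence is verified, the theorem follows by combining it with Theorem \ref{points}'s predecessor Lemma \ref{t-class} and the $E$-specific unwinding of admissibility.
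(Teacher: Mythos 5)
Your proposal is correct and follows essentially the same route as the paper, which states Theorem \ref{points} as a summary of the preceding development (Lemma \ref{t-class}, the definition of admissible triples, and the translation of the direct/inverse shifts of \cite{P-T15} into simultaneous translations of the pair of cuts) without a separate formal proof. The points you flag as needing care — that comparability of $I^{\sh}$ and $J^{\sh}$ reduces to the ultrafilters coinciding, and that restriction to a common zero set via greatest common divisors is coherent with the cut data — are exactly the ones the paper handles in the remarks following Lemma \ref{max-u} and Lemma \ref{t-class}.
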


\section{The Ziegler spectrum}\label{S-zieg}

In the previous section we have described the points of the topological space $\Zg_E$. In this section we will
touch upon the topology. First we estimate the number of points in this space.

\begin{prop}\label{numb}
The cardinality of the Ziegler spectrum of $E$ equals $2^{\,\bf c}$.
\end{prop}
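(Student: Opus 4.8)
The plan is to prove the two inequalities $|\Zg_E| \leq 2^{\,\bf c}$ and $|\Zg_E| \geq 2^{\,\bf c}$ separately, relying on Theorem \ref{points} to replace points of the spectrum by equivalence classes of admissible triples $(U,I,J)$. The upper bound is the easy direction: by the general fact quoted in Section \ref{S-model} there are at most $2^{\,\max(|E|,\aleph_0)}$ points, and since $|E| = {\bf c}$ this is exactly $2^{\,\bf c}$; alternatively one counts admissible triples directly, noting that $U$ is an ultrafilter on an at most countable set (and there are $2^{\,\bf c}$ such by \cite{Jech}), while each of $I$, $J$ is determined by a cut on a chain of cardinality ${\bf c}$, so there are at most $2^{\,\bf c}$ choices for each coordinate, giving $\leq 2^{\,\bf c}$ triples and hence $\leq 2^{\,\bf c}$ equivalence classes.

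For the lower bound I would exhibit $2^{\,\bf c}$ pairwise inequivalent admissible triples. The cleanest source is the set of maximal ideals of $E$: recall from Section \ref{S-ideal} that the free maximal ideals $M$ of $E$ are in bijection with the non-principal ultrafilters $U = U_f$ on zero sets $D = Z(f)$, and distinct ultrafilters (after passing to a common refinement) give distinct maximal ideals. Fixing one infinite nowhere dense zero set $D$ — say $D = Z(\sin z)$, which is countably infinite — the ${2^{\,\bf c}}$ distinct non-principal ultrafilters on $D$ produce $2^{\,\bf c}$ distinct free maximal ideals $M_U$ of $E$. Each gives an admissible triple $(U, M_U, 0)$ (case 2 of admissibility), hence an indecomposable pure injective module $\PE(U, M_U, 0)$, namely the field $E/M_U$. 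One must check these are pairwise inequivalent: by clause 3 of the equivalence relation in Theorem \ref{points}, $(U, M_U, 0) \sim (U', M_{U'}, 0)$ would force $U$ and $U'$ to have a common restriction to some $D' \in U \cap U'$ on which the cuts agree up to shift; but for maximal ideals the cut is the full one (all positive multiplicity functions), so the only obstruction is that $U$ and $U'$ restrict to the same ultrafilter, which by maximality means $U = U'$.

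The main obstacle — really the only genuine subtlety — is making sure that the $2^{\,\bf c}$ ultrafilters on $D$ actually remain distinct as data attached to modules, i.e. that the passage "restrict to a common zero set" in Theorem \ref{points} cannot accidentally identify them. Since all our triples live on the \emph{same} fixed countable set $D$, no restriction is needed: two ultrafilters on $D$ are equal as ultrafilters iff they are equal, so the $2^{\,\bf c}$ cardinality of the space of ultrafilters on a countably infinite set (cited from \cite[pp. 255--256]{Jech}) transfers directly. Combining the two bounds yields $|\Zg_E| = 2^{\,\bf c}$. I would also remark in passing that one could instead use the generic points $Q(E/P)$ for the $2^{\,\aleph_1}$ many primes in a single maximal chain, but under $\neg\CH$ that only gives ${\bf c}$ many, so the maximal-ideal construction above is the robust choice independent of set-theoretic hypotheses.
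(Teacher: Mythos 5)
Your argument is correct and is essentially the paper's own proof: the upper bound from $|E|={\bf c}$ (at most $2^{\,\max(|R|,\aleph_0)}$ points for any ring), and the lower bound from the $2^{\,\bf c}$ non-principal ultrafilters on one fixed countable zero set, giving pairwise inequivalent admissible triples $(U,M_U,0)$ because the cut attached to a maximal ideal is the full cut and is unchanged by shifts. One slip in an aside: $\PE(U,M_U,0)$ is \emph{not} the field $E/M_U$ --- in $E/M_U$ the element $\bar 1$ fails to be divisible by the elements of $M_U$, so that field realizes the triple $(U,M_U,M_U)$, while $\PE(U,M_U,0)$ (annihilator $M_U$, divisible by every nonzero element) is rather the injective hull of $E/M_U$; this misidentification plays no role in the counting and does not affect the proof.
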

\begin{proof}
Since the cardinality of $E$ is continuum, we conclude that $|\Zg_E|\leq 2^{\,\bf c}$. On the other hand, chosen
a nonzero countable subset $D$ of $\C$, one can construct $2^{\,\bf c}$ ultrafilters $U$ on $D$, hence the same amount
of free maximal ideals of $E$. When $M$ ranges over these maximal ideals, then the admissible triples $(U,M,0)$
provide non-isomorphic indecomposable pure injective modules.
Namely, if $a\notin M$ then $(M:a)= M$, hence the direct or inverse shift does not change the corresponding cut.
\end{proof}

In fact the above constructed points can be separated from each other using Ziegler topology. Namely, assume that
$U, U'$ are different ultrafilters on $D$, hence there is a zero set $Z(g)\seq D$ which is in $U$ but not in $U'$.
Then $g$ acts with torsion on $\PE(U,M,0)$, but as an automorphism on $\PE(U',M,0)$, hence the former point is
separated from the latter by the pair $xg=0$ over $x= 0$. Thus $\Zg_E$ has a collection of $2^{\, \bf c}$ points
which can be pairwise separated, hence not elementary equivalent.

We will employ the following point of view on the Ziegler spectrum of any B\'ezout domain $B$. Because each point
of $\Zg_B$ localizes, the whole space is covered by the closed subsets, the Ziegler spectra of localizations
$B_P$ for prime (or just maximal) ideals of $B$. If we consider these spaces as 'stalks', then the topology
on $\Zg_B$ is patched from these topologies using basic open sets from Fact \ref{z-basis}.

Each $B_P$ is a valuation domain, and the Ziegler spectrum of this class of rings was thoroughly investigated
(see \cite[Ch. 12, 13]{Punb}, or \cite{Gre15} for recent development). In more detail, let $\Ga$ denote the
value group of a valuation domain $V$. The nonnegative part $\Ga^+$ of $\Ga$ can be identified as a poset with
principal ideals of $V$. We use the first copy of $\Ga^+$ to represent annihilator formulae, and its second copy
to encode divisibility formulae. In this way the sum $a\mid x+ xb=0$ is represented by the point $(b,a)$ on
the quarter plane $\Ga^+\times \Ga^+$, and each pp-formula corresponds to a finite collection (conjunction) of
such points. Further the whole lattice of pp-formulae over $V$ is a free product of these two chains in the
variety of modular lattices, in particular it is distributive.

Also indecomposable pp-types over $V$ correspond to pairs of cuts $(I,J)$ on $\Ga^+$, hence are represented
as points on the completed quarter plane $\wh\Ga\times \wh \Ga$, or rather by lines on this plane of slope $-1$
(moving along the line corresponds to taking shifts). If $a, b, c,d\neq 0$, then the basic open set
$a\mid x\wg xb=0$ over $c\mid x+ xd=0$ in the Ziegler spectrum is interpreted in \cite[Sect. 4]{PPT} as the
rectangle $(d,b]\times (c,a]$ on the plane 'catching' an indecomposable pure injective module, if its line
intersects this rectangle.

To recover topology consider the 'generic' case of a basic open set $(\phi/\psi)$, where
$\phi\doteq a\mid x\wg xb=0$ and $\psi\doteq c\mid x+ xd=0$ for some nonzero noninvertible $a, b, c, d\in B$.
Using a standard trick (see \cite{P-T15}) one may assume that $c= ga$ and $b= dh$ for nonunits $g, h\in B$.

Now suppose that $P$ is a prime ideal of $B$ and $V= B_P$. If $g\notin P$, then the above open set $(\phi/\psi)$
is trivial when restricted to $\Zg_V$; and the same holds true when $h\notin P$. Otherwise $g, h\in P$, and we
will intepret this open set as the above rectangle $(d,b]_P\times [c,a]_P$ (over $V$). Thus the basic open set $(\phi/\psi)$
can be thought of as a sheaf of rectangles when $P$ runs over prime ideals.

We will demonstrate few instances of this approach applied to the Ziegler spectrum of $E$. Recall that the ring
of quotients of $E$ is the field $Q$ of meromorphic functions. Since this module is indecomposable and injective,
it is a point of $\Zg_E$. Further, for each $t\in \C$ and each $k\geq 1$, the module $E_t(k)= E/(z-t)^k E$ is
indecomposable of finite length, hence is also a point in $\Zg_E$.

First we will describe isolated points in $\Zg_E$.

\begin{theorem}\label{isol}
The finite length points $E_t(k)$, $t\in \C$, $k\geq 1$ are isolated and dense in $\Zg_E$. Those are the
only isolated points in this space.
\end{theorem}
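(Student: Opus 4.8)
The plan is to treat the two assertions separately: (a) each $E_t(k)$ is isolated, and (b) these are the only isolated points. For (a), recall that a point $N$ of $\Zg_E$ is isolated if and only if there is a minimal pair of pp-formulae $(\phi/\psi)$ whose support $\{N\}$ is a singleton. The module $E_t(k)$ has finite length, so its pp-type is finitely generated; concretely $E_t(k)= E/(z-t)^kE$ corresponds, via Theorem \ref{points}, to the admissible triple $(U_t, I, J)$ with $U_t$ the principal ultrafilter at $t$, $I=(z-t)^kE$ and $J=0$. I would write down the pair explicitly: the element $1+ (z-t)^kE$ is annihilated by $(z-t)^k$ but not by $(z-t)^{k-1}$, and is not divisible by $z-t$. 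So I expect the pair $\phi\doteq x(z-t)^{k-1}=0$ over $\psi\doteq (z-t)\mid x$ (or a small variant coming from Fact \ref{z-basis}) to single out $E_t(k)$. The verification that the support of this pair is exactly $\{E_t(k)\}$ is the routine calculation: any point $N$ in it must have an element killed by $(z-t)^k$ but not $(z-t)^{k-1}$ and not divisible by $z-t$, and by the classification of admissible triples and Fact \ref{princ} the only such indecomposable pure injective module is $E/(z-t)^kE$.

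For density, I would argue that every nonempty basic open set $(\phi/\psi)$ from Fact \ref{z-basis} contains some $E_t(k)$. Using the reductions available for $E$ (the remarks after Fact \ref{elim}, and the normalization $c=ga$, $b=dh$ with $g,h$ nonunits as in the discussion of the ``generic case''), a nonempty such set is governed by finitely many complex points where $a,b,c,d$ impose multiplicity conditions; since these conditions only constrain finitely many zeros and their multiplicities, one can pick a single $t$ and an exponent $k$ so that $E_t(k)$ satisfies the required $\phi$-but-not-$\psi$ separation. This is again a finitary combinatorial check against Fact \ref{princ}; the main care is dealing with the degenerate cases where some of $a,b,c,d$ are zero or units, which I would dispatch separately.

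The heart of the theorem is (b): no other point is isolated. Here I would use that an isolated point $N$ has a finitely generated indecomposable pp-type, equivalently (over the Pr\"ufer domain $E$) it is a point whose support can be cut out by a single minimal pair, and argue that this forces $N$ to have finite length, hence by the classification to be some $E_t(k)$. Suppose $N=\PE(U,I,J)$ is isolated. If $U$ is nonprincipal, then by Lemma \ref{max-u} and the discussion in \S\ref{ultra}, the chain $\mathcal N_U=\N^D/U$ has cardinality $\mathbf c$ and is densely ordered modulo $\sim_{fin}$; the cut(s) $c(I)$, $c(J)$ can be perturbed within any open set, producing infinitely many (indeed $\mathbf c$-many) distinct admissible triples in the support of any pair that contains $N$ — so $N$ cannot be isolated. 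This uses the separation idea already sketched after Proposition \ref{numb} (different ultrafilters, or different cuts, are separated by annihilator/divisibility pairs). If $U=U_t$ is principal but $N$ is not of finite length, then the relevant cut $c(I)$ or $c(J)$ is not finitely generated over the standard copy of $\N$, i.e.\ $I$ or $J$ is not of the form $(z-t)^mE$; then again one can slide the cut and find infinitely many points in any neighborhood. The generic point $Q$ and the other infinite-length points fall under one of these cases.

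The step I expect to be the main obstacle is making the ``sliding the cut'' argument uniform and rigorous: one must show that whenever $N$ lies in a basic open set $(\phi/\psi)$ and $N$ is not some $E_t(k)$, there is genuinely a \emph{different} point of $\Zg_E$ in $(\phi/\psi)$ — and in fact infinitely many, so that no sub-basic-open neighborhood can isolate $N$. This requires translating the combinatorial data of $\phi,\psi$ (finitely many multiplicity inequalities at finitely many points of $\C$, plus, after localization, a rectangle $(d,b]\times(c,a]$ on the completed value-group plane of $E_P$) into the statement that the rectangle, or the corresponding region of cuts, contains more than one admissible line of slope $-1$. The interval structure of $\mathcal N_U$ from \S\ref{ultra} — in particular that between any two finite-equivalence classes there are $\mathbf c$-many others, and that a nonstandard element has unboundedly many elements below and above it — is exactly what powers this, so I would isolate it as a lemma and then feed it into the case analysis on $(U,I,J)$.
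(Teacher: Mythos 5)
Your overall strategy is the same as the paper's (isolate each $E_t(k)$ by an explicit pair, then show these points meet every nonempty basic open set), but two steps fail as written. First, the pair you propose, $\phi\doteq x(z-t)^{k-1}=0$ over $\psi\doteq (z-t)\mid x$, does not isolate $E_t(k)$ --- in fact $E_t(k)$ is not even in its support. In $E_t(k)=E/(z-t)^kE$ the solution set of $x(z-t)^{k-1}=0$ is exactly $\overline{(z-t)E}=E_t(k)(z-t)$, which is precisely the set of elements divisible by $z-t$; so $\phi(E_t(k))=\psi(E_t(k))$, whereas $E_t(j)$ for every $j<k$ does open this pair. Your verbal description of what $\bar 1$ satisfies is correct, but encoding it requires both halves of the normal form in Fact \ref{z-basis}: the paper takes $\phi\doteq 1\mid x\wedge x(z-t)^k=0$ and $\psi\doteq (z-t)\mid x+x(z-t)^{k-1}=0$, so that $\psi$ simultaneously rules out divisibility by $z-t$ and annihilation by $(z-t)^{k-1}$; only then does the annihilator ideal of a witnessing element get pinned down to exactly $(z-t)^kE$.

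Second, in the density argument your claim that a nonempty basic open set is ``governed by finitely many complex points where $a,b,c,d$ impose multiplicity conditions'' is false: $a,b,c,d$ are entire functions whose zero sets are typically countably infinite, so there is no finitary list of constraints to inspect. What the paper does instead is start from an arbitrary point $\PE(U,I,J)$ known to lie in the open set, use the ultrafilter $U$ to refine $D=Z(f)$ to a large subset on which the pointwise multiplicity inequalities between $a,c$ and between $b,d$ hold everywhere, and then choose any $t$ in that subset together with a suitable exponent $k$. Some such detour through an actual inhabitant of the open set is unavoidable. Finally, note that your part (b) --- the long perturbation-of-cuts analysis for ``these are the only isolated points'' --- is redundant: once density is proved, an isolated point $N$ gives a nonempty open set $\{N\}$, which must contain some $E_t(k)$, forcing $N\cong E_t(k)$. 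The paper leaves this implication tacit; your separate case analysis would in any event need exactly the same control over nonprincipal ultrafilters that the density proof already supplies.
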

\begin{proof}
First we will check that each point $E_t(k)$ is isolated. Namely set $a= 1$, $c= z-t$, $b= (z-t)^k$,
$d=(z-t)^{k-1}$ and consider the basic open set $(\phi/\psi)$, where $\phi\doteq a\mid x\wg xb=0$ and
$\psi\doteq c\mid x+ xd=0$. Clearly this pair opens the module $E_t(k)$ on the element $\bar 1$. Suppose that
this pair opens an indecomposable pure injective module $N$ on an element $m$. If $I$ is the annihilator of $m$,
then $(z-t)^k\in I$ and $(z-t)^{k-1}\notin I$ yields $I= (z-t)^k$. Similarly for the non-divisibility ideal $J$ of
$N$ we obtain $z-t\in J$, hence $J= (z-t)E$. Thus we conclude that $N$ is isomorphic to $E_t(k)$.

Now we would like to show that these points are dense in $\Zg_E$. It suffices to check that each nontrivial basic
open set $(\phi/\psi)$, where $\phi\doteq a\mid x\wg xb=0$ and $\psi\doteq c\mid x+ xd=0$, contains such a point.
We may assume that this open set contains a point not from the list, say a point $\PE(U,I,J)$, where
$I, J$ are nonzero, $0\neq f\in I, J$ and $U= U_f$ is a nonprincipal ultrafilter on $D= Z(f)$. By refining
$D$ we may assume that $\mu_c(z)< \mu_a(z)$ and $\mu_d(z)< \mu_b(z)$ for each $z\in D$, and choose any $t\in D$.
Since all multiplicities are natural numbers, it is easy, for some $k$, to shift a pp-type of $\ov 1$ in $E_t(k)$
in this interval, as desired.

Similar arguments apply when $I=0$ or $J=0$.
\end{proof}

Having described isolated points, we will look at the closed ones. We need the following auxiliary result.

\begin{lemma}\label{endo-fin}
Let $V$ be a valuation domain and let $N$ be an indecomposable finite endolength point  in the Ziegler spectrum
of $V$. Then one of the following holds.

1) $N$ is the quotient field $Q(V/P)$ for some factor $V/P$ by a prime ideal $P$, the \emph{generic} point.

2) $N$ is isomorphic to $E_P(k)= V_P/P_P^k$, $k\geq 2$, where $P$ is a prime ideal of $V$ such that the ideal
$P_P$ is not idempotent.
\end{lemma}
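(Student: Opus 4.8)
The plan is to exploit the well-developed classification of indecomposable pure injective modules over a valuation domain $V$ in terms of pairs of cuts $(I,J)$ on the nonnegative value group $\Ga^+$, as recalled in the preceding section, and to read off the finite endolength condition directly from these invariants. First I would recall that for an indecomposable pure injective $N$ over $V$, the endolength is finite precisely when the lattice of pp-definable subgroups of $N$ is finite; since $N$ is pp-uniserial (because $V$ is Pr\"ufer), this means the chain of definable subgroups of $N$ has finite length. In the picture where $N$ is represented by a line of slope $-1$ on the completed quarter plane $\wh\Ga\times\wh\Ga$, the definable subgroups correspond to the rational points the line passes through (or separates), so finiteness of this chain forces strong restrictions: essentially the two cuts $I$ and $J$ must each be "close to" a prime ideal of $V$, with only finitely many principal ideals strictly between the relevant bounds.

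The key steps, in order, would be: (1) By Lemma \ref{types} (or rather its valuation-domain analogue), $N$ localizes at the prime ideal $P = I^{\sh}\cup J^{\sh}$, so we may replace $V$ by $V_P$ and assume $P$ is the maximal ideal and $I,J$ are $P$-primary weakly prime ideals (or zero). (2) Analyze the cuts $I^{\sh}=J^{\sh}=P$ modulo $P$: the quotient $V/P$ is a valuation domain whose value group is the "tail" of $\Ga^+$ beyond $P$, and the images of $I$ and $J$ give cuts there. Finite endolength forces each of these images to be either the zero cut or the "top" cut, i.e. $I$ and $J$ are each either $P$ itself (giving, modulo $P$, the generic behaviour) or a power $P_P^k$ of $P$ in the localization. (3) Split into cases: if both cuts reduce to the trivial one we land in case 1), $N = Q(V/P)$; if one of them is a genuine power $P^k$ of $P$ with $k\geq 2$, the corresponding finite-length analysis (together with the observation that $k=1$ would make $N = V/P$ which is not pure injective unless $P$ is not idempotent, or else reduces to case 1) gives case 2), with the idempotency condition on $P_P$ appearing exactly as the obstruction to $P_P^k = P_P^{k+1}$, which is what forces $k$ to be meaningful. (4) Conversely check (or note it is already known) that these modules do have finite endolength.

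The main obstacle I expect is step (2)–(3): carefully extracting from "the chain of definable subgroups of $N$ is finite" the dichotomy that each cut is either trivial or a finite power of the maximal ideal, and in particular pinning down why in case 2) the prime $P$ must satisfy $P_P \neq P_P^2$ (non-idempotence) — this is the point where the module $V_P/P_P^k$ genuinely "sees" finitely many steps rather than collapsing or being infinite. I would handle this by working concretely with the value group: finiteness of the interval $(J, I]$ on $\wh\Ga$ (in the localized picture) together with the requirement that the endpoints be as described means the only room for a nontrivial finite chain is when $P_P$ is principal-ish in the sense that $P_P/P_P^2 \neq 0$, i.e. $P_P$ non-idempotent, and then $k$ counts the length. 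The rest is routine bookkeeping with cuts and shifts.
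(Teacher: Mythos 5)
Your route is genuinely different from the paper's: you propose to read finite endolength directly off the pair of cuts $(I,J)$ on $\wh\Ga\times\wh\Ga$, whereas the paper short-circuits this by observing that finite endolength implies $\Sigma$-pure injectivity, invoking the known structure theory of such modules over valuation domains (Puninski, \emph{Serial Rings}, Ch.~16): after localizing at $P$ and factoring by the annihilator one gets a noetherian valuation ring $V'=V_P/I_P$ with $N\cong E_{V'}(V'/\mathrm{rad}\,V')$, and finite endolength then forces $V'$ artinian, hence self-injective, so $N\cong V'\cong V_P/P_P^k$. Your plan is viable in principle, but the step you yourself flag as the main obstacle --- deriving from ``the chain of pp-definable subgroups is finite'' the dichotomy that each cut is either $P$ or a finite power of $P_P$, and that the two cuts combine (up to shift) into a single pair $(P_P^k,P_P)$ --- is exactly where all the mathematical content of the lemma lives, and it is not carried out: phrases like ``essentially,'' ``principal-ish,'' and ``routine bookkeeping'' stand in for the argument that an infinite strictly descending chain of subgroups $Na$ (or ascending chain of annihilators) appears whenever a cut sits strictly between consecutive powers of a non-idempotent $P_P$, or whenever $P_P$ is idempotent and the cut is not $P$ itself. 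As written, the proposal is a plan with the decisive lemma missing, not a proof; either supply that value-group analysis in detail or cite the $\Sigma$-pure injective classification as the paper does.

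Two smaller points. First, your parenthetical that for $k=1$ the module ``$V/P$ \ldots is not pure injective unless $P$ is not idempotent'' is wrong: $V_P/P_P\cong Q(V/P)$ is always pure injective (it is the quotient field of $V/P$, divisible and torsion-free over that domain); the correct reason $k=1$ is excluded from case 2) is simply that it duplicates case 1), as the paper notes. Second, be careful with ``finite endolength iff the lattice of pp-definable subgroups is finite'': the correct general statement is that the endolength equals the \emph{length} of that lattice; for pp-uniserial modules the two coincide, so your usage is salvageable, but you should say so explicitly since it is the pivot of your whole argument.
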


Here we excluded the case $k=1$ in 2), because the factor $V_P/P_P$ is isomorphic to $Q(V/P)$.

\begin{proof}
Each module $Q(V/P)$ has endolength one, and each module $E_P(k)$ has finite length over $V_P$, hence is of finite
endolength over $V$.

Suppose that $N$ is an indecomposable finite endolength $V$-module. It follows that $N$ is $\Sig$-pure injective,
i.e.\ has a d.c.c.\ on definable subgroups. The structure of such modules over valuation domains is well known
(see \cite[Ch. 16]{Punb} for a more general setting). Namely, let $I$ denote the annihilator of $N$ and let
$P$ be the localizing ideal of $N$, hence $I\seq P$.

Then $N$ is a $V_P$-module, furthermore $V'= V_P/I_P$ is a noetherian valuation ring and $N$ is isomorphic
to the injective envelope (over this ring) of the unique simple $V'$-module $V_P/P_P$.

If $V'$ is not artinian, then $N$ has the ascending chain of definable (annihilator) subgroups, hence is not
of finite endolength, a contradiction. Thus $V'$ is artinian, hence self-injective, and $N$ is isomorphic to $V'$,
i.e.\ to $V_P/I\cong V_P/P_P^k$ for some $k\geq 1$.

If $k=1$, then $V'= V_P/P_P\cong Q(V/P)$, hence $N$ is generic. Otherwise we may assume that $P_P$ is not
idempotent.
\end{proof}

Note that this description works equally well for any B\'ezout domain $B$. Because it is difficult to decide
in this general framework when the maximal ideal $P_P$ of the localization $B_P$ is idempotent, we
will prefer to stay down to living examples. For instance, if $B= \A$ is the ring of algebraic integers, then
one could take square roots, hence each prime ideal is idempotent. This is almost the case for $E$ with few
exceptions, - see below.

Note that the lattice of pp-formulae of a B\'ezout domain $B$ is always distributive, hence the same holds true for
any theory $T$ of $E$-modules. It follows from  \cite[Thm. 5.3.28]{Preb2} that the \emph{isolated condition}
holds true: each isolated point in $T$ is isolated by a minimal pair. Now from \cite[Cor. 5.3.23]{Preb2} we
conclude  that a point in the Ziegler spectrum of this theory is closed iff it is of finite endolength.

However one should be cautious when using Lemma \ref{endo-fin} in this general setting - this lemma applies just
to the theory of \emph{all} modules. This is exactly the case we investigate now.

\begin{prop}\label{closed}
The following is a complete list of closed points of $\Zg_E$.

1) The generic modules $Q(E/P)$, where $P$ runs over prime ideals of $E$. In particular, when $P=0$, we obtain
the field $Q$ of meromorphic functions.

2) The modules $E_t(k)= E/(z-t)^k E$, $t\in \C$, $k\geq 2$.

3) The modules $E_M(k)= E/M^k$ for each free maximal ideal $M$ and $k\geq 2$.
\end{prop}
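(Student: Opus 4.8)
The plan is to reduce, via localisation, to the classification of finite endolength indecomposable pure injectives over valuation domains (Lemma \ref{endo-fin}), and then, for each prime $P$ of $E$, to decide whether the maximal ideal of $E_P$ is idempotent. By the remarks preceding the statement, a point of $\Zg_E$ is closed if and only if it has finite endolength, so it suffices to list the finite endolength indecomposable pure injective $E$-modules. Let $N$ be such a module and $P=P(N)$ its localising ideal; then $N$ is a finite endolength indecomposable point over the valuation domain $V=E_P$ (the endolength is unchanged since $\operatorname{End}_E(N)=\operatorname{End}_V(N)$), and since we work with the theory of \emph{all} $V$-modules, Lemma \ref{endo-fin} applies to $N$ over $V$. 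The primes of $V=E_P$ are the $P'_P$ with $P'$ prime in $E$, $P'\seq P$; for such $P'$ one has $V_{P'_P}=E_{P'}$, the maximal ideal of $V_{P'_P}$ is $P'_{P'}$, and $Q(V/P'_P)=Q(E/P')$. Hence $N$ is either a generic $Q(E/P')$, or $N\cong E_{P'}/(P'_{P'})^k$ for some prime $P'$ of $E$ with $P'_{P'}$ not idempotent and some $k\geq 2$.

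Next I decide for which primes $P'$ the ideal $P'_{P'}$ is not idempotent. For $P'=0$ it is idempotent, and in any case $E_{P'}/(P'_{P'})^k=Q=Q(E/0)$. If $P'=M$ is a maximal ideal, then by the description of ideals in Section \ref{S-ideal} the value group of $E_M$ is the group hull of $\mN_U=\N^D/U$ (for $0\neq f\in M$, $D=Z(f)$, and $U$ the ultrafilter attached to $M$), namely $\Z^D/U$, or just $\Z$ when $M$ is fixed; this has a least positive element, the class $[1]$ of the constant function $1$ (no class of $\Z^D/U$ can lie strictly between $[0]$ and $[1]$, by the filter property), so the maximal ideal of $E_M$ is principal, hence not idempotent, and $E_M/M^k_M$ has length $k$. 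Finally let $P'$ be nonzero and not maximal; by Lemma \ref{weak} it lies in a unique maximal ideal $M$, with $P'\subsetneq M$. Then the value group of $E_{P'}$ is $\Ga/H$, where $\Ga=\Z^D/U$ is the value group of $E_M$ and $H$ is the convex subgroup generated by the values of $M\sm P'$, so that $\{0\}\subsetneq H\subsetneq\Ga$. I claim $\Ga/H$ has no least positive element; granting this, every positive element of $\Ga/H$ is a sum of two positive ones, so the maximal ideal of $E_{P'}$ is idempotent, whence $(P'_{P'})^k=P'_{P'}$ and $E_{P'}/(P'_{P'})^k=E_{P'}/P'_{P'}=Q(E/P')$, again a generic.

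To prove the claim, let $f\in\Z^D$ represent a positive element of $\Ga/H$, so $f>0$ and $f\notin H$; observe $[1]\in H$, it being the least positive element of $\Ga$ and hence $\le$ every positive element of the nonzero convex subgroup $H$. From $0\le f\le 2\lceil f/2\rceil$ and $0\le f-1\le 2\lf f/2\rf$, using $f\notin H$, $f-1\notin H$ and convexity of $H$, we get $\lceil f/2\rceil\notin H$ and $\lf f/2\rf\notin H$. Since $\lf f/2\rf$ and $\lceil f/2\rceil$ are nonnegative, sum to $f$, and lie (modulo $U$) strictly between $0$ and $f$, the image of $\lf f/2\rf$ is a positive element of $\Ga/H$ strictly below the image of $f$; thus the image of $f$ is not least positive, as claimed. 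Assembling, the finite endolength points of $\Zg_E$ are precisely the generics $Q(E/P)$ and the modules $E/M^k$ for $M$ maximal and $k\ge 2$, that is, $E_t(k)$ when $M=(z-t)E$ is fixed and $E_M(k)$ when $M$ is free.

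It remains to verify that each of these is actually a closed point. A generic $Q(E/P)=E_P/P_P$ is a point of $\Zg_E$ --- the pure injective envelope of the indecomposable type whose annihilator and non-divisibility ideals both equal $P$ (Theorem \ref{points}) --- and it has endolength one, its $E$-endomorphism ring being the field $Q(E/P)$. For a maximal ideal $M$, the ideal $M^k$ has radical $M$, so $E/M^k$ is local with maximal ideal $M/M^k$ and coincides with its localisation $E_M/M^k_M$; since $M_M$ is principal this is a uniserial module of length $k$, hence an indecomposable point of finite endolength. Together these give the asserted list. I expect the idempotency analysis to be the main obstacle: one must see that, contrary to the generic behaviour of a prime in a Pr\"ufer domain, the maximal ideal of $E_P$ is idempotent for \emph{every} nonzero non-maximal prime $P$ --- the crucial input being the availability of division by $2$ with bounded remainder in $\Z^D/U$ --- whereas at a maximal ideal the least positive element $[1]$ persists and is not a sum of two positive elements.
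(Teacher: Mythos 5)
Your proof is correct, and its skeleton is the one forced by the surrounding text and used in the paper: closed $=$ finite endolength (via the isolation condition), localisation at $P(N)$, Lemma \ref{endo-fin}, and then a decision, prime by prime, of whether the maximal ideal of the localisation is idempotent. Where you genuinely diverge is in how that decision is made and how the resulting modules are identified. The paper argues multiplicatively in $E$ itself: for a nonzero non-maximal prime $P$ it asserts that one can take \lq\lq square roots'' inside $P$ (this is precisely your $\lf \mu_f/2\rf$ device in disguise), so $P$ is idempotent; for a fixed maximal ideal it uses $E_M\cong \C[[X]]$; and for a free maximal ideal $M$ it invokes Henriksen's theorem that $E/M_{\fty}\cong \C[[X]]$, where $M_{\fty}=\bigcap_k M^k$, to conclude $N\cong E/M^k$. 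You instead compute value groups: $E_M$ has value group $\Z^D/U$ with least positive element $[1]$, so $M_M$ is principal, non-idempotent, and $E_M/M_M^k\cong E/M^k$ is uniserial of length $k$; while for a nonzero non-maximal $P'$ the value group of $E_{P'}$ is $\Ga/H$ for a nontrivial convex $H\ni[1]$, and your division-by-two-with-remainder argument shows it has no least positive element, whence $P'_{P'}$ is idempotent. Your route buys a self-contained treatment of the free maximal case (no appeal to Henriksen) and makes transparent that the dichotomy is exactly maximal versus non-maximal; the paper's route is shorter but leaves the square-root claim to the reader and relies on the external isomorphism $E/M_{\fty}\cong\C[[X]]$. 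Your closing check that the listed modules really are closed (endolength one for the generics, length $k$ local uniserial modules for $E/M^k$) corresponds to the paper's one-line opening remark.
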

\begin{proof}
Clearly all such points are of finite endolength, hence closed.

Let $N$ be a closed point, and let $P$ be its localization ideal, in particular $N$ is a closed point in
the Ziegler spectrum of the valuation domain $E_P$. Using Lemma \ref{endo-fin} we may assume that
that $N\cong E_P(k)$, $k\geq 2$, where the ideal $P_P$ of $E_P$ is not idempotent.

It is easily seen that, if $P$ is a non-maximal prime ideal of $E$, then there are square roots in $P$.
We conclude that $P$ is idempotent, therefore occurs just in case 1). Thus we may assume that $P=M$ is maximal.

If $M$ is fixed, then $M= (z-t)E$, hence not idempotent. Furthermore clearly the localization $E_M$ is
isomorphic to the power series ring $\C[[X]]$, hence $N$ is isomorphic to $E_t(k)$.

Suppose that $M$ is free and maximal, with corresponding nonprincipal ultrafilter $U$. The intersection
$M_{\fty}$ of powers of $M$ is a prime ideal of $E$ consisting of functions which are not constant modulo $U$,
and this ideal is not idempotent. Furthermore being the intersection of powers $M^k$, this ideal clearly
annihilates $N$. Also it follows from \cite[Thm. 8]{Hen53} that the factor $E/M_{\fty}$ is isomorphic to
$\C[[X]]$. It is easily derived that $N\cong E_M(k)$.
\end{proof}

Dropping from $\Zg_E$ the isolated points we obtain $\Zg'_E$, the first \emph{Cantor--Bendixson derivative} of
this space, with the induced topology. This class of modules generates the theory $T'_E$, the
\emph{$\CB$-derivative} of the theory $T_E$ of all $E$-modules.

\begin{theorem}\label{e-prime}
The theory $T'_E$ coincides with the theory of $E_S$-modules, where $S$ is the multiplicative closed set
consisting of nonzero polynomials.
\end{theorem}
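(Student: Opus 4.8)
The plan is to identify the two theories by showing they define the same class of modules up to the relevant closure, equivalently that they have the same Ziegler-closed set of points, and then to match those points explicitly. Recall that $\Zg'_E$ consists of all points of $\Zg_E$ that are not isolated; by Theorem \ref{isol} these are precisely the points other than the $E_t(k)$. On the other hand, $\Zg_{E_S}$ is a closed subset of $\Zg_E$: localization at $S$ is a ring epimorphism, so an $E_S$-module is the same thing as an $E$-module on which every nonzero polynomial acts invertibly, and such modules form a definable subcategory whose Ziegler-closed set is exactly $\{N \in \Zg_E : s \text{ acts as an automorphism on } N \text{ for every } 0 \neq s \in S\}$. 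So the statement $T'_E = T_{E_S}$ amounts to the claim that an indecomposable pure injective $E$-module is non-isolated in $\Zg_E$ if and only if it is an $E_S$-module.

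First I would establish the easy inclusion: every $E_S$-module lies in $\Zg'_E$. Indeed, if $N = \PE(U,I,J)$ is a point on which every nonzero polynomial is invertible, then by Lemma \ref{types} the localizing ideal $I^{\sh} \cup J^{\sh}$ contains no nonzero polynomial; in particular $N$ is not any $E_t(k)$ (whose localizing ideal is the fixed maximal ideal $(z-t)E$, which does contain nonzero polynomials), so $N$ is not isolated by Theorem \ref{isol}. Conversely, and this is the substantive direction, I would show that every non-isolated point is an $E_S$-module, i.e. that if some nonzero polynomial $s$ fails to act invertibly on $N = \PE(U,I,J)$ then $N$ is one of the isolated points $E_t(k)$. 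If $s$ is not an automorphism of $N$ then $s \in I^{\sh} \cup J^{\sh}$ by Lemma \ref{types}; since $s$ factors into linear terms, some $z - t \in I^{\sh} \cup J^{\sh}$. Now $z - t$ has the single zero $t$, so $I^{\sh} \cup J^{\sh} \ni z-t$ forces (via the description of $I^{\sh}, J^{\sh}$ and the ultrafilter $U$, using that the only zero set meeting every member of $U$ in a nonempty set when $z-t$ is in the localizing ideal is $\{t\}$) the ultrafilter $U$ to be the principal ultrafilter $U_t$, and hence $I = (z-t)^k E$, $J = (z-t)^l E$ for some $k, l$. By the shift equivalence of Theorem \ref{points}(1) and the computation already made in the text (shifting by $\min(k,l)$), the pair is equivalent to one with $l = 0$ (or $k=0$), and the resulting point is $E_t(k')$ for some $k' \geq 1$ — an isolated point. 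Thus a non-isolated point admits no such $s$, i.e. is an $E_S$-module.

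Having shown $\Zg'_E$ and $\Zg_{E_S}$ have the same points, I would conclude they carry the same induced topology and hence generate the same theory: both are the Ziegler-closed set cut out by ``$s$ acts invertibly for all $0 \neq s \in S$'', and a definable subcategory is determined by its Ziegler-closed set together with the ambient topology, which is inherited from $\Zg_E$ in both cases. Concretely, the relevant closure operation is the same — $T'_E$ is by definition the theory of the class $\Zg'_E$, and $T_{E_S}$ is the theory of the class of all modules on which $S$ acts invertibly, whose indecomposable pure injectives are exactly $\Zg_{E_S} = \Zg'_E$ — so the two theories have the same indecomposable pure injective models, hence (since a theory of modules is determined by its indecomposable pure injectives) coincide. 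The main obstacle I anticipate is the careful verification that ``$z - t \in I^{\sh} \cup J^{\sh}$'' really does force $U = U_t$: this uses the separation reformulation noted just after Lemma \ref{types} (that $f \notin I^{\sh} \cup J^{\sh}$ iff $Z(f)$ is separated from $U$), applied to $f = z-t$, so that $z-t$ being in the localizing ideal means $\{t\}$ is \emph{not} separated from $U$, which for the singleton $\{t\}$ means $\{t\} \in U$, i.e. $U = U_t$; pinning down this chain of equivalences cleanly is where the argument must be watertight.
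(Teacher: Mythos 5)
Your overall strategy is the same as the paper's: reduce the identity of the two theories to the equality of the closed sets $\Zg'_E$ and $\Zg_{E_S}$ inside $\Zg_E$, and check the easy inclusion by observing that no $E_t(k)$ is an $E_S$-module. The problem is in the substantive direction, at the step where you pass from ``$z-t\in I^{\sh}\cup J^{\sh}$ and $U=U_t$'' to ``$I=(z-t)^kE$, $J=(z-t)^lE$, hence the point is some $E_t(k')$''. This tacitly assumes both $I$ and $J$ are nonzero. The triples $(U_t,(z-t)^kE,0)$ and $(U_t,0,(z-t)^lE)$ are admissible (cases 2) and 3) of the definition), and the shift equivalence of Theorem \ref{points} never converts a zero ideal into a nonzero one or vice versa --- the equivalence relation is defined case by case precisely so that the vanishing pattern of $(I,J)$ is preserved --- so these triples are \emph{not} equivalent to any $(U_t,(z-t)^{k'}E,(z-t)E)$; your parenthetical ``equivalent to one with $l=0$'' is exactly what a shift cannot achieve. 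Concretely, $\PE(U_t,(z-t)E,0)$ is the injective hull of the simple module $E/(z-t)E$ and $\PE(U_t,0,(z-t)E)$ is the ``adic'' point at $t$; by Theorem \ref{isol} neither is isolated (neither is an $E_t(k)$), so both lie in $\Zg'_E$, yet $z-t$ acts on the first with nonzero kernel and on the second non-surjectively. Hence ``non-isolated $\Rightarrow$ $E_S$-module'' fails at these points, and the claimed equality of point sets, on which your whole argument rests, breaks down there.

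To be fair, the paper's own proof is just as silent here: it only argues that $z-t$ is invertible on points attached to \emph{nonprincipal} ultrafilters, which covers neither of the two families above. So you have not overlooked an argument that the paper actually supplies; rather, any complete proof must explain what happens to the Pr\"ufer and adic points at each $t\in\C$. As the definitions stand they belong to $\Zg'_E\sm\Zg_{E_S}$ (indeed each becomes isolated inside $\Zg'_E$, by the pairs $x(z-t)=0$ over $x=0$ and $x=x$ over $(z-t)\mid x$ respectively, which also sits uneasily with Theorem \ref{isol-1}), so on the face of it they are only removed at the second Cantor--Bendixson step. A watertight write-up must either show these points are excluded from $\Zg'_E$ for some reason not visible in Theorems \ref{points} and \ref{isol}, or record that the identification with the theory of $E_S$-modules can only hold after they too have been discarded.
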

\begin{proof}
Note that for each point $t\in \C$ and each nonprincipal ultrafilter $U$ on a zero set $D$, we have
$D\sm \{t\}\in U$, therefore $z-t$ acts by multiplication as an automorphism on each indecomposable
pure injective module corresponding to $U$.

It follows that each point of $\Zg'_E$ is defined over $E_S$ (we put for simplicity from now on $E' = E_S$). On the other hand, it is not difficult to check
that $E'$ is the model of $T'_E$, hence (see \cite[Cor. 6.1.5]{Preb2}) the ring of definable scalars of
$T'_E$ coincides with $E'$.
\end{proof}

Thus, after taking the first derivative, we obtain a more regular B\'ezout domain $E'$. Further, because of
the isolated  condition, the lattice of pp-formulae of $T'_E$ is obtained from the lattice of pp-formulae of
$E$ by collapsing intervals of finite length.

We will not need higher $\CB$-derivatives, because of the following result.

\begin{theorem}\label{isol-1}
$\Zg'_E$ has no isolated points. Furthermore no nontrivial interval in the lattice of pp-formulae of $T'$
is a chain.
\end{theorem}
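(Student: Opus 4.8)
The plan is to transfer the problem to the ring $E':=E_S$ via Theorem \ref{e-prime} and to prove the single statement
$$(\star)\qquad \text{for all pp-formulae } \psi<\phi \text{ of } T' \text{, the interval } [\psi,\phi] \text{ has two incomparable elements.}$$
Indeed, by Theorem \ref{e-prime} the pp-lattice of $T'$ is the pp-lattice of the B\'ezout domain $E'$, hence distributive, so the isolated condition holds (\cite[Thm. 5.3.28]{Preb2}): a point of $\Zg'_E=\Zg(T')$ is isolated exactly when there is a covering pair $\psi\prec\phi$ with $(\phi/\psi)=\{N\}$. Since a covering interval $\{\psi,\phi\}$ is a two-element chain, $(\star)$ forbids covering pairs altogether, hence forbids isolated points, and it is literally the second assertion; so the whole theorem reduces to $(\star)$.

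To prove $(\star)$, take $\psi<\phi$, so $(\phi/\psi)\neq\eset$ in $\Zg'_E$. First I would check $(\phi/\psi)\neq\{Q\}$: if $Q\in(\phi/\psi)$ then, $Q$ being a field, $\phi(Q)=Q$ and $\psi(Q)=0$, and by Fact \ref{elim} this forces a summand $a_0\mid x$ with $a_0\neq0$ in $\phi$ and a conjunct $xd_0=0$ with $d_0\neq0$ in $\psi$; then for any at most countable nowhere dense $D$ disjoint from $Z(a_0d_0)$, any nonprincipal ultrafilter $U$ on $D$ and the corresponding free maximal ideal $M$, one checks $E/M=\PE(U,M,M)\in(\phi/\psi)$, and there are $2^{\,\bf c}$ such $M$. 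So fix $N_0=\PE(U_0,I_0,J_0)\in(\phi/\psi)$ with $U_0$ nonprincipal on $D_0=Z(f)$, $0\neq f\in I_0\cap J_0$, realised by an element $m_0$ whose pp-type contains $\phi$ but not $\psi$, and fix $b_0\in I_0$ witnessing $xb_0=0$ in that type. Shrinking $D_0$ inside $U_0$ (so it stays infinite and $D_0\seq Z(b_0)$), as in the proof of Theorem \ref{isol} and the rectangle description of \cite[Sect. 4]{PPT}, I would arrange that on $D_0$ all the strict multiplicity inequalities relevant to $(\phi/\psi)$ hold, so that for \emph{every} ultrafilter $U$ on $D_0$ there is a point of $(\phi/\psi)$ whose admissible triple $(U,I,J)$ has $c(I)$ the principal cut of $\mN_U$ generated by $\mu_{b_0}$ (together with a compatible divisibility cut).

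Next, split $D_0=D_1\sqcup D_2$ into two infinite subsets and, by Fact \ref{wnk} (Weierstrass' theorem), pick $g_1,g_2\in E$ with $Z(g_i)=D_i$ and $\mu_{g_i}(z)=\mu_{b_0}(z)$ for all $z\in D_i$ (possible since $\mu_{b_0}\geq1$ on the shrunk $D_0$). Set $\theta_i:=\psi\vee(\phi\wg(xg_i=0))$, so $\psi\leq\theta_i\leq\phi$. For each $i$, choosing an ultrafilter $U_i$ on $D_0$ with $D_i\in U_i$ and a point $N_i=\PE(U_i,I_i,J_i)\in(\phi/\psi)$ with $c(I_i)$ principal at $\mu_{b_0}$: since $\mu_{g_i}$ and $\mu_{b_0}$ agree on $D_i\in U_i$, they have the same class in $\mN_{U_i}$, so $g_i\in I_i$, i.e. $m_ig_i=0$ for the realising element $m_i$ of $N_i$; hence $m_i\in\phi(N_i)\cap(xg_i=0)(N_i)$ but $m_i\notin\psi(N_i)$, giving $N_i\in(\theta_i/\psi)$. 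On the other hand $Z(g_{3-i})=D_{3-i}$ is disjoint from $D_i\in U_i$, so $g_{3-i}\notin I_i^{\sh}\cup J_i^{\sh}$ by Lemma \ref{types} and the remark after it; thus $g_{3-i}$ acts as an automorphism of $N_i$, so $(xg_{3-i}=0)(N_i)=0$, $\theta_{3-i}(N_i)=\psi(N_i)$, and $N_i\notin(\theta_{3-i}/\psi)$. Therefore $(\theta_1/\psi)$ and $(\theta_2/\psi)$ are incomparable open sets; since $\theta\leq\theta'$ in $[\psi,\phi]$ entails $(\theta/\psi)\seq(\theta'/\psi)$, the elements $\theta_1,\theta_2$ are incomparable in $[\psi,\phi]$, which is $(\star)$.

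The hard part will be the shrinking step: I need to spell out, for the finitely many entire functions occurring in $\phi$ and $\psi$, precisely which strict inequalities between their multiplicities must hold on $D_0$ so that after shrinking the ``window'' of admissible cut-pairs producing points of $(\phi/\psi)$ is non-empty and wide enough to realise the annihilator cut principal at $\mu_{b_0}$ together with a compatible divisibility cut --- this is the same bookkeeping that underlies Theorem \ref{isol} and \cite[Sect. 4]{PPT}, \cite{P-T15}. With that in hand, the remaining verifications ($Z(g_i)=D_i$; $g_i$ killing $m_i$ while $g_{3-i}$ is invertible on $N_i$; $(\phi/\psi)\neq\{Q\}$; and that $\theta_1,\theta_2$ collapse to neither $\psi$ nor $\phi$, which is forced by their incomparability) are routine.
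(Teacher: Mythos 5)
Your strategy is sound and genuinely different from the paper's. The paper argues structurally: it reduces to the localizations $E_M$ at free maximal ideals, identifies the pp-lattice of such a localization as the lattice freely generated by two copies of the chain $\mN_U$, cites \cite{Pun99} for the fact that the first Cantor--Bendixson step collapses exactly the finite-length intervals on each copy, and concludes because the quotient chain $\mN/\sim_{fin}$ is dense (Section \ref{ultra}), so a lattice freely generated by two dense chains has no nontrivial chain intervals. You replace the free-lattice machinery by an explicit construction of incomparable $\theta_1,\theta_2$ obtained by splitting a zero set into two infinite halves and separating them on points whose ultrafilters concentrate on the two halves; this is more self-contained and exhibits concrete witnesses, at the price of the ``window'' bookkeeping you defer. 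One remark: your appeal to Theorem \ref{e-prime} at the outset is load-bearing, not cosmetic. It is what guarantees that every non-generic point of $\Zg(T')$ is some $\PE(U,I,J)$ with $U$ nonprincipal; without that identification the points $\varinjlim E/(z-t)^kE$ and $\varprojlim E/(z-t)^kE$ (admissible triples over the principal ultrafilter at $t$ with $J=0$, resp.\ $I=0$) would intervene, and for a pair such as $(x(z-t)=0)/(x=0)$ your construction could not even start, since there $D_0=\{t\}$ is finite. The paper makes the same reduction tacitly when it restricts to free maximal ideals.

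Two genuine gaps remain. First, after excluding $(\phi/\psi)=\{Q\}$ you fix $N_0=\PE(U_0,I_0,J_0)$ with $0\neq f\in I_0\cap J_0$ and a nonzero $b_0\in I_0$, but nothing rules out that every point of $(\phi/\psi)$ in $\Zg(T')$ is torsion-free, i.e.\ has $I=0$ (which threatens precisely when $\phi$ carries no nontrivial annihilator conjunct). Since the whole construction hinges on the formulas $xg_i=0$ and on $g_i$ killing $m_i$, you must first show that torsion can always be introduced into the pair: for instance, replace $I_0=0$ by the principal cut of $\mN_{U_0}$ generated by $\mu_d+1$, where $xd=0$ is the annihilator conjunct of the relevant $\psi_{k_0}$; one then checks this ideal is weakly prime, admissible with $J_0$, and that the modified point still lies in $(\phi/\psi)$. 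This works, but it is a case your write-up does not cover, and $b_0$ must be taken from the new annihilator. Second, the shrinking step you flag as the hard part is indeed where the content lies; it is the same computation as in Theorem \ref{isol} (arrange the strict pointwise inequalities between the multiplicity functions of $a,c$ and of $d,b$ on a $U_0$-large set, so that the principal cuts at $\mu_b$ and $\mu_c$ give an admissible pair inside the rectangle for \emph{every} ultrafilter on the shrunk $D_0$), and it does go through, but the proof is not complete without it. The remaining steps --- the reduction of the first assertion to the second via the isolation condition, the incomparability of $\theta_1,\theta_2$ from $N_i\in(\theta_i/\psi)\sm(\theta_{3-i}/\psi)$, and the use of Lemma \ref{types} to make $g_{3-i}$ invertible on $N_i$ --- are correct.
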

\begin{proof}
Since the theory of all $E$-modules enjoys the isolation condition, the latter statement implies the former.

Clearly it suffices to prove the claim for each localization $V= E_M$, where $M$ is a free maximal ideal with corresponding ultrafilter $U$.
Let $L$ be the lattice of pp-formulae of $\Zg_V$. Then $L$ is freely generated by two copies of the chain
$\mN_U$. We put to use results of \cite{Pun99}. Namely, the effect of the first step of the $\CB$-analysis on
the lattice $L$ is that it collapses the intervals of finite length on each of two copies of $\mN$.

Thus the lattice $L'$ is freely generated by two copies of the derivative chain $\mN'$. We have already seen
in Section \ref{ultra} that this chain is dense. It easily follows that no nontrivial interval in $L'$ is a
chain, as desired.
\end{proof}

For a definition of width and breadth of a lattice see \cite[Sec. 7.1]{Preb2}. It follows from
Theorem \ref{isol-1} that both dimensions are undefined for the theory of $E'$-modules, and hence for
$E$-modules. Furthermore in \cite{P-T15} we constructed a superdecomposable pp-type, hence a superdecomposable
pure injective module over $E$. It follows that nonzero polynomials act as automorphisms on this module, hence
it is defined over $E'$. Below we will show that the pure injective envelope of $E'$ itself is superdecomposable
if viewed as a module over $E'$ and consequently over $E$.

But before that let us consider the closed points in $T'_E$. From the above discussion it follows that they are of
finite endolength. Note that the prime ideals of $E_S$ one-to-one correspond to free prime ideals of $E$.

\begin{lemma}\label{close-1}
The closed points in $\Zg'_E$ are exactly the generic points $Q(E/P)$, where $P$ runs over free prime ideals of $E$.
\end{lemma}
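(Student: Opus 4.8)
The plan is to combine the general finite-endolength characterization of closed points (from \cite[Cor. 5.3.23]{Preb2}, valid since the pp-lattice is distributive and the isolation condition holds) with the concrete list of finite-endolength points over $E$ already obtained in Proposition \ref{closed}, and then intersect that list with $\Zg'_E$.

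First I would recall that, by Theorem \ref{e-prime}, $\Zg'_E$ is the Ziegler spectrum of $E' = E_S$, and its pp-lattice is obtained from that of $E$ by collapsing finite-length intervals; in particular the isolation condition still holds (distributivity is preserved under localization), so a point of $\Zg'_E$ is closed if and only if it is of finite endolength over $E'$. Next I would go through the three families in Proposition \ref{closed} and ask which of them survive in $\Zg'_E$, i.e.\ which are not isolated in $\Zg_E$. The finite length points $E_t(k)$ are isolated by Theorem \ref{isol}, so they are removed. The points $E_M(k)$ for free maximal $M$ and $k\geq 2$ are of finite length over $E_M$, hence of finite endolength; but I claim they are already isolated in $\Zg_E$ — the pp-type of $\bar 1$ in $E_M(k)$ has the maximal ideal $M_U$ as annihilator cut (generated by the constant function $1$) and $M_U$ again as non-divisibility cut, and the covering relation $\mu'(d)=\mu(d)+1$ in $\mN_U$ together with the finite-length interval $(z-t)E$ over $(z-t)^kE$ used in Theorem \ref{isol} adapts to show these points are isolated by a minimal pair. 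So the $E_M(k)$ disappear as well, leaving only the generics $Q(E/P)$.

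Then I would pin down which generics remain. A generic $Q(E/P)$ lies in $\Zg'_E$ iff every nonzero polynomial acts invertibly on it, iff $S\cap P=\eset$, iff $P$ is a free prime ideal (a nonzero polynomial has finite zero set, so it belongs to a prime ideal $P$ exactly when $P$ is fixed, i.e.\ equals some $(z-t)E\supseteq P$, which for prime $P$ forces $P=(z-t)E$ by Fact \ref{adeq-fork}; a free prime ideal contains no polynomial). Conversely, for a free prime $P$ the field $Q(E/P)=Q(E'/P')$ is a nonzero element-type module of endolength one, hence a closed point of $\Zg'_E$, and it is not isolated since it is not isolated in $\Zg_E$ (Theorem \ref{isol} lists all isolated points). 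Finally, a point of $\Zg'_E$ of finite endolength that is not a generic would, by Lemma \ref{endo-fin} applied over the valuation localization $E'_{P'}$, be of the form $E'_{P'}/(P'_{P'})^k$ with $k\geq 2$ and $P'_{P'}$ non-idempotent; but as in the proof of Proposition \ref{closed}, free prime ideals of $E$ admit square roots (using Fact \ref{wnk} to halve multiplicity functions along the ultrafilter), so $P'_{P'}$ is idempotent and this case cannot occur. Hence the closed points of $\Zg'_E$ are exactly the $Q(E/P)$ with $P$ a free prime ideal.

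The main obstacle I anticipate is the idempotency argument for free prime ideals $P'$ of $E'$: one must check carefully that the square-root construction of Proposition \ref{closed} (which produced $p^2\in P$ for suitable $p$) genuinely forces $P'_{P'} = (P'_{P'})^2$ in the localization, i.e.\ that multiplicity functions in the cut $c(P)$ can be halved modulo the ultrafilter $U$ while staying in $c(P)$ — this uses both that $P$ is prime (so $c(P)$ is a prime cut in the sense discussed after Remark \ref{rem}, closed under $\mu\mapsto \lfloor\mu/2\rfloor$) and Fact \ref{wnk} to realize the halved function by an entire function. Once that is in place, the rest is bookkeeping against the already-established lists in Proposition \ref{closed} and Theorem \ref{isol}.
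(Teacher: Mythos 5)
Your overall strategy (closed $=$ finite endolength, then intersect the list from Proposition \ref{closed} with $\Zg'_E$) matches the paper's, but the step where you dispose of the modules $E_M(k)$ for a \emph{free} maximal ideal $M$ and $k\geq 2$ is wrong, and this is exactly the one nontrivial case. You claim these points are isolated in $\Zg_E$; that contradicts Theorem \ref{isol}, which you yourself invoke a few lines later as listing \emph{all} isolated points. Indeed, the density half of Theorem \ref{isol} shows that every nonempty basic open set contains some $E_t(k)$ with $t\in\C$, so no point outside that family can be isolated; in particular $E_M(k)$ for free $M$ does survive into $\Zg'_E$. Your sketched minimal pair does not isolate $E_M(k)$: any pair $a\mid x\wg xb=0$ over $c\mid x+xd=0$ which opens $E_M(k)$ on $\bar 1$ only constrains the cuts modulo the nonprincipal ultrafilter $U$, and such a pair always catches finite length points $E_t(k)$ for suitable $t$ in the zero set as well.

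Your fallback for the same case --- that $P'_{P'}$ is idempotent because ``free prime ideals of $E$ admit square roots'' --- also fails when $P=M$ is maximal: a function $f$ with $\mu_f\equiv 1$ on $D$ lies in $M$, and a square root $g$ with $g^2\mid f$ would need $\mu_g\geq 1$ and $2\mu_g\leq 1$ modulo $U$, which is impossible; the square-root argument in Proposition \ref{closed} is explicitly restricted to \emph{non-maximal} primes, and that proof even records that the relevant ideal fails to be idempotent before polynomials are inverted. The paper deals with the free maximal case by a different claim, namely that $M_S=(M_\infty)_S$ where $M_\infty=\bigcap_k M^k$, so that the localization of $M_S$ becomes idempotent and case 2) of Lemma \ref{endo-fin} cannot occur over $E'$. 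Your write-up needs an argument of this kind to eliminate the points $E_M(k)$ (and that identity itself deserves scrutiny: multiplying $f$ by a polynomial changes $\mu_f$ at only finitely many points of $D$, so it is not clear how a bounded multiplicity function becomes unbounded modulo a nonprincipal $U$). The remaining bookkeeping in your proposal --- in particular that $Q(E/P)$ lives over $E_S$ iff $S\cap P=\eset$ iff $P$ is free --- is fine.
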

\begin{proof}
Following the proof of Proposition \ref{closed}, it suffices to notice that each prime ideal $P$ of $E'$
is idempotent. Namely, the only case we have not considered is when $P$ corresponds to the free maximal ideal
$M$. However, after localizing, we obtain $M_S= (M_{\fty})_S$, hence this ideal is idempotent.
\end{proof}

Recall that a module $M$ is said to be \emph{superdecomposable}, if no nonzero direct summand of $M$ is
indecomposable. We need the following general fact.

\begin{lemma}\label{envelope1}
Let $B$ be a commutative B\'ezout domain. Then the following are equivalent.

1) The pure injective envelope of $B$ as a module over itself is superdecomposable.

2) If $0\neq a\in B$ is not invertible, then there are coprime nonunits $a_1, a_2 \in B$ dividing $a$.
\end{lemma}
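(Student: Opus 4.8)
The plan is to characterize when $\mathrm{PE}(B)$ is superdecomposable by analysing the lattice of pp-definable subgroups of $B$ as a $B$-module, since $\mathrm{PE}(B)$ and $B$ have the same such lattice, and a pure injective module is superdecomposable precisely when its lattice of pp-types has no minimal nonzero "point", equivalently when no pp-type is indecomposable with envelope a nonzero direct summand of $\mathrm{PE}(B)$. Concretely, the element $1 \in B$ generates $B$ as a submodule, and $\mathrm{PE}(B) = \mathrm{PE}(p)$ where $p = pp_B(1)$ is the pp-type of $1$ in $B$; since $1$ has zero annihilator and is divisible by $a$ iff $a$ is a unit, we have $I(p) = 0$ and $J(p) = $ the set of nonunits of $B$ (which, since $B$ is B\'ezout, is a weakly prime ideal iff it is actually an ideal, i.e. $B$ is local — in general $J(p)$ need not be an ideal, and that is exactly the source of decomposability). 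So the strategy is: $\mathrm{PE}(B)$ is \emph{indecomposable} (has an indecomposable summand / fails to be superdecomposable "at the top") iff the pp-type $p$, or one of its direct shifts, splits off an indecomposable piece.

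First I would reduce, using Fact \ref{elim}, to studying how the divisibility formulae $a \mid x$ sit inside the pp-type $p = pp_B(1)$: a formula $a \mid x$ lies in $p$ iff $a$ is a unit, so the relevant lattice is the lattice of principal ideals $aB$ with $a$ a nonunit, ordered by reverse inclusion (divisibility). A direct-summand decomposition $\mathrm{PE}(B) = N_1 \oplus N_2$ would, on the element $1$, produce a corresponding splitting; using that $B$ is a domain and every indecomposable pure injective is pp-uniserial, I would argue that $\mathrm{PE}(B)$ has an indecomposable summand iff there is a nonunit $a$ such that the interval $[x = x, \, a \mid x]$ in the pp-lattice (equivalently, the set of principal ideals $bB$ with $B \supsetneq bB \supseteq aB$, ordered by divisibility) contains a \emph{prime} cut or behaves uniserially below $a$ — the precise condition being that $a$ is \emph{not} expressible as a product of two coprime nonunits. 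The implication (2) $\Rightarrow$ (1): if every nonunit $a$ factors as $a = a_1 a_2$ with $a_1, a_2$ coprime nonunits, then $a \mid x$ is equivalent (in the theory of $B$-modules, using the B\'ezout / coprimeness machinery from \cite[Sect. 3]{P-T15}, just as in the reduction after Fact \ref{elim}) to the \emph{sum} $(a_1 \mid x) + (a_2 \mid x)$; iterating, every divisibility formula decomposes into a sum of "smaller" ones with no minimal nonzero piece, so the pp-lattice of $B$ has no atom below $x = x$, forcing $\mathrm{PE}(B)$ to have no indecomposable summand, i.e. to be superdecomposable. For the converse (1) $\Rightarrow$ (2) I would argue contrapositively: if some nonunit $a$ has no such coprime factorization, then every factorization $a = bc$ into nonunits has $\gcd(b,c)$ a nonunit, which (via the adequacy-style / weakly-prime analysis of Lemma \ref{sharp} and Lemma \ref{weak}) forces the divisors of $a$ to lie in a single localization and in fact the pp-type obtained by an appropriate shift of $p$ to be \emph{indecomposable}, with envelope a nonzero (pure injective, finitely generated over the localization) direct summand of $\mathrm{PE}(B)$, contradicting superdecomposability.

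The main obstacle I anticipate is the converse direction (1) $\Rightarrow$ (2), and specifically making precise the step "no coprime nonunit factorization of $a$" $\Rightarrow$ "$\mathrm{PE}(B)$ has an indecomposable summand". One must extract from the failure of the coprime-splitting property an actual indecomposable pp-type: the natural candidate is $p' = $ some shift of $p = pp_B(1)$ having annihilator ideal $I = 0$ and non-divisibility ideal $J'$ equal to a \emph{weakly prime} ideal whose $(\cdot)^{\sh}$ is a single prime $P$ — precisely the maximal "coprime-free" weakly prime ideal below $a$. I would need to check that the set of nonunits dividing $a$, when $a$ admits no coprime factorization, generates (or is contained in, after the right shift) a weakly prime ideal, using that $B \smallsetminus J$ being closed under $\mathrm{lcm}$ is exactly the statement that $\gcd(b,c)$ is a nonunit whenever $b, c$ are — and then invoke Lemma \ref{t-class} to conclude the pair $(0, J')$ is admissible (condition 3 is automatic since $I^{\sh} = 0 \subseteq J'^{\sh}$), hence gives a genuine point $\mathrm{PE}(0, J')$ of $\Zg_B$ that splits off $\mathrm{PE}(B)$. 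Once that is set up, checking it is a direct summand of $\mathrm{PE}(B)$ is a standard pure-injectivity argument (the shift realizes it inside $\mathrm{PE}(B)$ and, being of finite endolength over the localization, it is algebraically compact and splits off), so the crux is really the combinatorial translation between the coprime-factorization hypothesis and weak primeness of the relevant divisibility cut.
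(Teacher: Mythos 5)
Your identification of the key combinatorial content --- a nonunit $a$ with coprime nonunit divisors $a_1,a_2$ gives two formulas $(a_1\mid x),(a_2\mid x)$ lying strictly between $(a\mid x)$ and $x=x$ whose sum is trivial --- is exactly the heart of the paper's (much shorter) proof. But both of your bridges from this data to superdecomposability have genuine gaps. In the direction $(2)\Rightarrow(1)$: the asserted pp-equivalence of $a\mid x$ with the \emph{sum} $(a_1\mid x)+(a_2\mid x)$ is false (the sum defines $Ma_1+Ma_2$, which already in $M=B$ equals $B$ while $(a\mid x)(B)=aB$; it is the \emph{conjunction} that is equivalent to $a\mid x$ for coprime $a_1,a_2$). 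What you actually need is only that $a_i\mid a$ forces $(a\mid x)\to(a_i\mid x)$ while $(a_1\mid x)+(a_2\mid x)\in p:=pp_B(1)$. More seriously, the principle ``the pp-lattice has no atom below $x=x$, hence $\PE(B)$ is superdecomposable'' is wrong: over a valuation domain $V$ with dense value group the lattice of principal ideals has no atoms, yet $\PE(V)$ is indecomposable (its pp-type $(0,\,\text{max ideal})$ is admissible). The correct bridge, which the paper uses, is the ``no large formula'' criterion for superdecomposability of a hull, which over a distributive pp-lattice reduces to: for every $\phi\in p^-$ there exist $\phi_1,\phi_2\in p^-$ with $\phi\to\phi_i$ and $\phi_1+\phi_2\in p$. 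Combined with coherence of $B$ (so pp-definable subgroups of $B$ are exactly the principal ideals, and implication is containment in the theory of $B$), condition 2) translates verbatim into this criterion and \emph{both} implications drop out simultaneously.

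Your constructive detour for $(1)\Rightarrow(2)$ is salvageable in part: if $a$ admits no coprime nonunit factorization then $J=\{b\in B\mid \gcd(a,b)\ \text{is a nonunit}\}$ is indeed a weakly prime ideal (additivity uses the hypothesis, and closure of the complement under $\lcm$ is the coprimality computation you indicate), and $(0,J)$ is admissible. But the concluding ``standard pure-injectivity argument'' does not exist as described: $\PE(0,J)$ is generally \emph{not} of finite endolength over the localization (for $B=\Z$, $a=p$, it is the ring of $p$-adic integers), and algebraic compactness of a module does not make it a direct summand of $\PE(B)$. To split it off you must exhibit a nonzero element of $\PE(B)$ whose pp-type is \emph{exactly} $(0,J)$, and producing such an element from the hypothesis is precisely what the large-formula machinery is for --- it is the step your sketch leaves open. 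I recommend replacing both halves by the single lattice-theoretic argument above.
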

\begin{proof}
Since $B$ is coherent, each pp-definable subgroup in $B$ (as a module) is a principal ideal
(see \cite[Thm. 2.3.19]{Preb2}). Let $p= pp_B(1)$ denote the pp-type taken in the theory of $B$ (i.e. in the theory
of flat = torsion free $B$-modules), hence $p$ is a filter in the lattice of principal ideals of $B$.

Then 1) says that $p$ is superdecomposable, i.e. contains no \emph{large formulas}. Since $B$ is distributive, this
is the same as to say that for each $\phi\in p^-$ there are $\phi_1, \phi_2\in p^-$ such that $\phi\to \phi_i$ and
$\phi_1+ \phi_2\in p$, i.e. $\phi_1+ \phi_2$ is a trivial formula. Replacing formulas by ideals they define, we
obtain the desired.
\end{proof}

We apply this criterion to our setting.

\begin{prop}\label{envelope2}
The pure injective envelope of $E'$ (over $E'$ and hence over $E$) is a superdecomposable module.
\end{prop}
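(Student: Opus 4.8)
The plan is to apply Lemma \ref{envelope1} to the ring $E'= E_S$, so it suffices to verify condition 2) of that lemma: every nonzero noninvertible $f\in E'$ has two coprime nonunit divisors in $E'$. Since $E'$ is the localization of $E$ at the multiplicative set $S$ of nonzero polynomials, its units are exactly the functions $f$ whose zero set $Z(f)$ is finite, and its nonzero noninvertible elements are those with $Z(f)$ infinite (hence countably infinite with no finite accumulation point). So the real content is: given such an $f$, write $Z(f)= A\sqcup B$ as a disjoint union of two infinite sets, and produce entire functions $f_1, f_2$ with $Z(f_1)\seq A$, $Z(f_2)\seq B$ (with appropriate multiplicities so that $f_1 f_2$ divides $f$, or at least so that $f_1, f_2$ divide $f$ in $E'$), and such that $f_1, f_2$ are coprime in $E'$.

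First I would spell out the units of $E'$: by Fact \ref{princ}, $f$ is invertible in $E = E(\C)$ iff $Z(f)=\eset$; after inverting the polynomials, $f$ becomes a unit of $E'$ iff $Z(f)$ differs from $\eset$ by finitely many points, i.e. iff $Z(f)$ is finite. Hence a nonzero $f\in E'$ is noninvertible precisely when $Z(f)$ is infinite. Next, fix such an $f$ and partition $Z(f)= A\sqcup B$ with both $A$ and $B$ infinite — this is possible since $Z(f)$ is countably infinite. Both $A$ and $B$ are still absolute-value-nondecreasing sequences with no finite accumulation point, so by Weierstrass' factorization theorem (quoted after Fact \ref{princ}) there are entire functions $f_1, f_2$ with $Z(f_1)= A$, $Z(f_2)= B$, and with $\mu_{f_1}(z)= \mu_f(z)$ on $A$, $\mu_{f_2}(z)= \mu_f(z)$ on $B$. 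Then $Z(f_1)\cap Z(f_2)= \eset$, so $\gcd(f_1,f_2)=1$ already in $E$, a fortiori in $E'$; and by Fact \ref{princ}, $f_1 f_2$ divides $f$ in $E$, so in particular $f_1$ and $f_2$ each divide $f$ in $E'$. Finally, neither $f_1$ nor $f_2$ is a unit of $E'$ since each has an infinite zero set. This verifies condition 2) of Lemma \ref{envelope1}, and the Proposition follows; the last sentence ("over $E'$ and hence over $E$") is covered by the remark preceding Lemma \ref{envelope1} together with Theorem \ref{e-prime}, since nonzero polynomials act as automorphisms on this envelope.

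I do not expect a serious obstacle here: the only mild subtlety is making sure the decomposition is stated at the level of $E'$ rather than $E$, i.e. tracking that the relevant notion of "nonunit" is with respect to $E'$ (infinite zero set) rather than $E$ (nonempty zero set) — otherwise one might be tempted to split off a single zero, which gives a divisor that becomes a unit in $E'$. Both halves of the partition must remain infinite, which is why we split $Z(f)$ into two infinite pieces; this is exactly the feature that fails in $E$ itself (where a principal prime like $(z-t)E$ has no coprime nonunit divisors) but succeeds in $E'$ after the polynomials are inverted. Everything else is a direct citation of Fact \ref{princ} and Weierstrass factorization.
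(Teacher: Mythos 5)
Your proof is correct and follows essentially the same route as the paper: verify condition 2) of Lemma \ref{envelope1} for $E'$ by partitioning the infinite zero set $Z(f)$ into two infinite pieces and using Weierstrass factorization to split $f$ into two coprime factors, each noninvertible in $E'$ because its zero set remains infinite. Your version merely spells out more explicitly the identification of the units of $E'$ and the citation of Fact \ref{princ}, which the paper leaves implicit.
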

\begin{proof}
Suppose that $fE'$ is a proper ideal of $E'$, hence we may assume that $f \in E$,
$f$ is not a polynomial and $Z(f)$ is an infinite countable set. Let $Z(f)= I_1\cup
I_2$ be a partition of $Z(f)$ into infinite sets. Choose $f_1\in E$ such
that $Z(f_1)= I_1$ and $f\in f_1 E$. Then $f= f_1 f_2$, where both functions are noninvertible in $E'$ and
$Z(f_2)= I_2$, hence $f_1$ and $f_2$ are coprime.
\end{proof}

The following question naturally arises from the previous results.

\begin{ques}
Describe all representations of $\PE(E')$ as a pure injective envelope of direct sums of pure injective modules.

For instance, describe all direct summands of this module, and all direct sum decompositions
$\PE(E')= N_1\oplus N_2$.
\end{ques}

\end{document}